\newif\iftikz
\newtheorem{thm}{Theorem}[section]
\newtheorem*{yano}{Yano's Conjecture}
\newtheorem*{yanoext}{Extended Yano's Problem}
\newtheorem*{yanoextcj}{Extended Yano's Conjecture}
\newtheorem*{yanoextcj1}{Modified extended Yano's Conjecture}
\newtheorem{prop}[thm]{Proposition}
\newtheorem{cor}[thm]{Corollary}
\newtheorem{lema}[thm]{Lemma}
\theoremstyle{remark}
\theoremstyle{definition}
\newtheorem{ejm}[thm]{Example}
\DeclareMathOperator{\spec}{Spec}
\DeclareMathOperator{\Gr}{Gr}
\newcommand\bn{{\mathbb N}}
\newcommand\bz{{\mathbb Z}}
\newcommand\bq{{\mathbb Q}}
\newcommand\bc{{\mathbb C}}
\newcommand\br{{\mathbb R}}
\newcommand{\cO}{{\mathcal O}}
\def\f{f^{-1}\{0\}}
\title{On the $b$-exponents  of generic isolated plane curve singularities}
\author[E. Artal]{E.~Artal Bartolo${^1}$}
\address{Departamento de Matem\'aticas-IUMA, Universidad de Zaragoza,
c/~Pedro Cerbuna 12, 50009 Zaragoza, SPAIN}
\email{artal@unizar.es}
\thanks{${^1}$Partially supported by the grant 
MTM2016-76868-C2-2-P}
\author[Pi.~Cassou-Nogu\`es]{Pi.~Cassou-Nogu\`es${^2}$}
\address{Institut de Math\'ematiques de Bordeaux, Universit\'e de Bordeaux, 
350, Cours de la Lib\'eration, 33405, Talence Cedex 05, FRANCE}
\email{Pierrette.Cassou-nogues@math.u-bordeaux.fr}
\thanks{${^2}$Partially supported by MTM2016-76868-C2-1-P
 and MTM2016-76868-C2-2-P}
\author[I. Luengo]{I. Luengo${^3}$}
\address{ICMAT (CSIC-UAM-UC3M-UCM), Dpto. de \'Algebra, Geometr{\'i}a y Topolog{\'i}a, 
Universidad Complutense de Madrid,
Plaza de las Ciencias s/n, Ciudad Universitaria, 28040 Madrid, SPAIN}
\email{iluengo@ucm.es}
\author[A. Melle]{A.~Melle-Hern\'andez${^3}$}
\address{Instituto de Matem\'atica Interdisciplinar (IMI), Dpto. de \'Algebra, Geometr{\'i}a y Topolog{\'i}a, , 
Universidad Complutense de Madrid,
Plaza de las Ciencias s/n, Ciudad Universitaria, 28040 Madrid, SPAIN}
\email{amelle@ucm.es}
\thanks{${^3}$Partially
supported by the grant MTM2016-76868-C2-1-P and Grupo Singular}
\dedicatory{Dedicated to the memory of Egbert Brieskorn with great admiration}
\subjclass[2010]{Primary 14F10,32S40; Secondary 32S05,32A30.}
\keywords{Bernstein-Sato polynomial, $b$-exponents, Brieskorn lattice, improper integrals.}
\begin{document}

\begin{abstract}
In 1982, Tamaki Yano proposed a conjecture predicting  how is
the set of $b$-exponents of an irreducible  plane curve singularity germ which is generic 
in its equisingularity class. 
In 1986, Pi.~Cassou-Nogu\`es  proved the conjecture for the one Puiseux pair case in \cite{Pi862}.
In \cite{ACLM-Yano2} the authors  proved the conjecture 
for  two Puiseux pairs germs  whose complex algebraic monodromy has distinct eigenvalues. 
A natural problem induced  by Yano's conjecture is, for a generic equisingular deformation of an isolated plane curve singularity germ 
to study how the set of  $b$-exponents  
depends on the topology of the singularity. 
The natural generalization suggested by  Yano's approach holds  in suitable examples 
(for the case of  isolated singularites which are Newton  
non-degenerated, commode and  whose set of spectral numbers are all distincts).
Morevover we show with an example that this natural  generalization is not correct.
 We restrict  to germs  whose complex algebraic monodromy has distinct eigenvalues
such that the embedded resolution graph has vertices of valency at most $3$ and
we discuss some examples with multiple eigenvalues.  
\end{abstract}

\maketitle

\section*{Introduction}

Let $f:(\bc^n,0)\to (\bc,0)$ be  a germ of a complex analytic function  whose  zero locus  $(f^{-1}(0), 0)\subset (\bc^n,0)$ 
 defines  an isolated hypersurface singularity germ, that is  the   Minor number  of $f$ at $0$,
$\mu(f,0):=\dim_\bc\bc\{z_1,\ldots ,z_n\}/\left(\frac{\partial f}{\partial z_1},\ldots, \frac{\partial f}{\partial z_n}\right)$  is finite.
A \emph{Milnor fibration}  was constructed in \cite{Mi68} as follows. 
 Set   $B_{\varepsilon} = \{ z\in  \bc^n: |z| < \epsilon\}$ and
 $S_{\epsilon} = \{ z\in  \bc^n:  |z|= \epsilon\}$, one can choose $\epsilon_0$ such that for all 
$ 0 < {\epsilon} \leq  {\epsilon}_0$,  $f^{-1} (0)$ is transverse to $S_{\epsilon}.$  
For  $0<\eta\ll \epsilon_0$ and $D_{\eta} = \{t\in \bc: |t| < \eta\}$, let $X(t) = f^{- 1} ( t ) \cap B_{\epsilon_0/2}$ 
and $X = f^{- 1} (D_\eta) \cap B_{\epsilon_0/2}$ . By Milnor, for such suitable $\epsilon$ and $\eta$, the mapping
$X\setminus f^{-1} (0)\to D_{\eta}\setminus \{0\} $ is a $C^{\infty}$-locally trivial fibration whose general fibre $F_{f,0}$, 
called \emph{Milnor fibre},
has the homotopy type of a bouquet of exactly  $\mu(f,0)$ of $(n-1)$-dimensional spheres. 

The geometric monodromy   $h_{F_{f,0}}:F_{f,0}\to F_{f,0}$ of the Milnor fibration is the monodromy transformation of the Milnor fibration over the loop
$c\exp(2\pi t), t\in [0,1]$ and $c$ small enough.  The geometric monodromy induces  the complex algebraic monodromy 
$h^{a,j}: H^j (F_{f,0},\bc) \to H^{j} (F_{f,0},\bc)$  whose eigenvalues are roots of unity. 
Since the Milnor fibre is a connected bouquet of $(n-1)$-spheres,
the only interesting algebraic monodromy is  $h^{a,n-1}: H^{n-1}(F_{f,0},\bc) \to H^{n-1}(F_{f,0},\bc),$
where $\dim_{\bc} H^{n-1}(F_{f,0},\bc) =\mu(f,0) .$

Let $\cO$ be the ring  of germs of holomorphic functions on $(\bc^n,0)$, 
let $\mathcal{D}$ be the  ring of germs of holomorphic differential operators of finite order with coefficients in $\cO$. 
Let $s$ be an indeterminate commuting with the elements of $\mathcal{D}$ and set 
$\mathcal{D}[s]=\mathcal{D}\otimes_{\mathbb{C}} \mathbb{C}[s].$

Given a holomorphic germ  $f\in \cO$, one considers 
$\cO\left[\frac{1}{f}, s\right]\cdot f^s$ as a free $\cO \left[\frac{1}{f}, s\right]$-module of rank $1$  
with the  natural  $\mathcal{D}[s]$-module structure. Then,   
there exits a non-zero polynomial $B(s)\in \bc[s]$ and some 
differential operator $P=P(x,\frac{\partial}{\partial x},s)\in\mathcal{D}[s]$, 
holomorphic in $x_1,\dots,x_n$ and polynomial in 
$\frac{\partial }{\partial x_1},\dots,\frac{\partial }{\partial x_n}$, which  satisfy   the following functional equation
in $\cO \left[\frac{1}{f}, s\right] f^s$:
\begin{equation}\label{berstein-rel}
P(s,x,D)\cdot f(x)^{s+1}=B(s)\cdot f(x)^s.
\end{equation}
The monic generator $b_{f,0}(s)$ of the ideal  of such polynomials $B(s)$  is called
the \emph{Bernstein-Sato polynomial} (or $b$-function or  Bernstein polynomial) of $f$ at $0$.  
The same result holds if we replace $\cO$
by the ring of polynomials in a field ${\mathbb K}$ of  zero characteristic with the obvious corrections,  see e.g. 
\cite[Section~10, Theorem~3.3]{Co95}.

This result was first obtained for $f$ polynomial by  Bernstein in~\cite{B72} and in general 
by Bj\"{o}rk \cite{B:81}.
One can  prove  that $b_{f,0}(s)$ is divisible by $s+1$, and we also consider the \emph{reduced Bernstein-Sato polynomial}  
$\tilde{b}_{f,0}(s):=\frac{b_{f,0}(s)}{s+1}$.

In the case where $f$  defines an isolated singularity, one can consider the  nowadays called
 \emph{Brieskorn lattice}  
$H_0^{''}:= \Omega^n /df \wedge d \Omega^{n-2}$  introduced by  Brieskorn in \cite{Br70},  and its saturation
$ \tilde{H}_0^{''}=\sum_ {k\geq 0}  (\partial _t t)^k      H_0^{''} $.  
Malgrange \cite{M:75} showed that  
the reduced Bernstein polynomial  
$\tilde{b}_{f,0}(s)$ is the minimal polynomial of the endomorphism $-\partial_t t$ on the vector space 
$F:=\tilde{H}_0^{''}/ \partial_t^{-1}  \tilde{H}_0^{''}$, whose dimension equals  the
 Milnor number  $\mu(f,0)$ of $f$ at $0$. Following Malgrange \cite{M:75},
the set of $b$-\emph{exponents} are  
the $\mu$ roots  $\{\tilde{\beta}_1, \ldots,\tilde{\beta}_\mu \}$  of the characteristic polynomial of the   endomorphism $-\partial_t t$.
Recall also that $\exp(-2i\pi\partial_t t)$ can be identified with the (complex) \emph{algebraic monodromy}
of the corresponding Milnor fibre $F_{f,0}$ of the singularity at the origin.

Kashiwara~\cite{K:76} expressed these ideas using  differential operators and considered
 $\mathcal{M}:=\mathcal{D}[s]f^s/\mathcal{D}[s]f^{s+1}$,
where $s$ defines an endomorphism of $\mathcal{D}(s)f^s$ by multiplication.
This morphism keeps invariant $\tilde{\mathcal{M}}:=(s+1)\mathcal{M}$
and defines a linear endomorphism of $(\Omega^n\otimes_{\mathcal{D}}\tilde{\mathcal{M}})_0$
which is naturally identified with $F$ and under this identification $-\partial_t t$ 
becomes the endomorphism defined by the multiplication by~$s$.

In~\cite{M:75}, Malgrange proved that the set $R_{f,0}$ of roots of the Bernstein-Sato polynomial 
is contained in $\mathbb{Q}_{<0}$, see also Kashiwara~\cite{K:76}, who also restricts the set of candidate roots.
The number $-\alpha_{f,0}:=\max R_{f,0}$ 
is the opposite of the \emph{log canonical threshold} of the singularity and Saito~\cite[Theorem~0.4]{MS94}
proved that 
\begin{equation}\label{eq:saito}
R_{f,0}\subset[\alpha_{f,0}-n,-\alpha_{f,0}].
\end{equation}
Also Saito in \cite{MS:89} showed that the local moduli of $\mu$-constant deformation is determined by the
\emph{Brieskorn lattice} if the $\mu$-constant stratum is smooth, 
as in the case of germs of plane curves where he gave in \cite[p.~30]{MS:89} a more simple  
formula describing the reduced Bernstein-Sato.  
There are many papers devoted to study  Bernstein-Sato polynomial but it would be  
worthwhile to refer to  the  existence of a relative Bernstein-Sato polynomial in \cite{BGM92}, by  Brian\c{c}on  et al., 
and for results on the computation of the
roots of Bernstein-Sato  polynomial for functions with isolated singularity, even if the methods
used in \cite{BGMM89}  are different. In \cite{BMT07}, Brian\c{c}on  et al.  gave a multiple of the Bernstein-Sato
polynomial for any two variables function with isolated singularities. 
Some general properties of  $\mu$-constant deformations are also given by Varchenko in \cite{V80}.

There is another set  which is important too, the set of    \emph{exponents of the monodromy} 
(or spectral numbers, up to the shift by one, in the terminology of
Var\-chenko \cite{V82}).  This notion was first introduced by Steenbrink \cite{St}.

Let  $f: (\bc^{n},0) \longrightarrow (\bc ,0)$ be a germ of a holomorphic function with isolated singularity.
In \cite{St} Steenbrink constructed a mixed Hodge structure on $H^{n-1}(F_{f,0},\bc)$. 
Let 
$$H^{n-1}(F_{f,0},\bc)_{\lambda}=\ker  (T_s-\lambda:H^{n-1}(F_{f,0},\bc)\longrightarrow H^{n-1}(F_{f,0},\bc));$$
where $T_u,T_s$ are, respectively, the unipotent and semi-simple factors of the Jordan decomposition
  of the monodromy $h^{n-1}$. 

The set $\spec(f)$  of spectral numbers are $\mu$ rational numbers 
$$0<\alpha_1\leq \alpha_2\leq \cdots \leq \alpha_{\mu}<n$$
which are defined by the following condition:
\begin{gather*}
\# \{ j:\exp(-2\pi i\alpha_j)=
\lambda, \lfloor\alpha_j\rfloor= n-p-1\}=
\dim _{\mathbb{C}}\Gr_F^p H^{n-1}(F_{f,0},\mathbb{C})_{\lambda},\qquad
\lambda \neq 1\\
\# \{j:\alpha_j=n-p\}=\dim _{\bc}{\text Gr}_F^p H^{n-1}(F_{f,0},\bc)_1.
\end{gather*}
The set $\spec(f)$ of spectral numbers  is symmetric, that is $\alpha_i+\alpha_{\mu-(i-1)}=n$. 
 It is known that this set  is constant under $\mu$-constant deformation of $f$, see \cite{V82}.

As it is well-known, neither the Bernstein-Sato polynomial nor the $b$-exponents are
constant along $\mu$-constant deformation. Given an equisingular type, a generic
set of $b$-exponents or a generic Bernstein-Sato polynomial are expected.
In~\cite{Y82}, Yano proposed a formula (see next section) for the generic $b$-exponents 
for irreducible germs of curves (combined with the Jordan form of the monodromy,
this also yields to a formula for the generic Bernstein polynomial).
This formula was proved for one-Puiseux pair germs by the second named author in \cite{Pi861}
and reproved by M.~Saito in~\cite{MS:89}.

In \cite{ACLM-Yano2}, the conjecture was proved for irreducible 
singularities with two Puiseux pairs and monodromy without multiple eigenvalues.
In this paper, we discuss how to extend the formula for reducible germs of singularities.
There is a natural interpretation of Yano's formula in terms of the resolution
graph of the singularity, see \eqref{conjecture1}. 
We are going to prove in this paper that this formula holds for
singularities with vertices of valency at most~$3$ (and at most two
vertices of valency~$3$) and monodromy without multiple eigenvalues (distinct from~$1$)
(in fact, the correct hypothesis may be distinct exponents of the monodromy, besides~$1$).

The restriction on the number $3$-valency vertices comes from technical reason
but it is most probably avoidable; for example, the second named author proved it
in~\cite{Pi88} for singularities with non-degenerate and commode Newton polygon
(and distinct exponents for the monodromy). The other two conditions seem to
be more important, since we will give examples where it does not hold in
at least two cases: germs
where the vertices have valencies at most~$3$ but there are multiple exponents, 
and germs with vertices with valency greater than~$3$.
We will discuss also other examples and we will introduce the needed results about improper integrals.

\section{Extended Yano's problem}

Let $f:(\bc^{2},0)\to(\bc,0)$ be a germ of a non-zero holomorphic
function such that its zero locus defines an isolated singularity germ. 

\begin{yanoext}[\cite{Y82}]
For a generic equisingular deformation of an isolated plane curve singularity germ $f:(\bc^2,0)\to (\bc,0)$  
and  Milnor number $\mu,$ 
to study how the set of  $b$-exponents  $\{\tilde{\beta}_1\!,\ldots,\!\tilde{\beta}_\mu\}$ 
  depends on the topology of $f$. 
\end{yanoext}

The local Bernstein-Sato polynomial $b_{f,0}(s)$ of a singularity germ is a powerful analytic invariant, but it is, in general, extremely hard
to compute, even in the case of irreducible  plane curve singularities. It is well-known
that the  Bernstein-Sato polynomial varies in families in the (non-singular) $\mu$-constant stratum 
$\Sigma_{\mu(f,0)}$  of $f$ at $0$. 
 Since, for plane curves
this stratum is irreducible, it is conceivable that a \emph{generic} Bernstein-Sato polynomial
exists, i.e.,  the Bernstein-Sato polynomial of a germ~$f$ with the same topology as $f$, depends on~$f$, but there 
is a \emph{generic} Bernstein-Sato polynomial~$b_{\Sigma_{\mu(f,0)}}^{\text{gen}}(s)$: 
for every $\mu$-constant deformation of such an~$f$, there 
is a Zariski dense open set\ $\mathcal{U}$ on which the Bernstein-Sato polynomial
of any germ in~$\mathcal{U}$ equals 
$b_{\Sigma_{\mu(f,0)}}^{\text{gen}}(s)$.

\subsection{The original Yano's conjecture: the irreducible case}
\mbox{}

 Let  $f$ be an irreducible germ of  plane curve. In 1982, Tamaki Yano \cite{Y82} made a conjecture concerning 
the $b$-exponents of such germs.
Let $(n, b_1,b_2,\ldots, b_g)$  be the characteristic sequence of $f$,  see e.g. \cite[Section 3.1]{W04}. 
Recall that this means that $f(x,y)=0$ has as root (say over~$x$) a Puiseux expansion
$$
x=\dots+a_1 y^{\frac{b_1}{n}}+\dots+a_g y^{\frac{b_g}{n}}+\dots
$$
with exactly~$g$ characteristic monomials.
Denote $b_0:=n$ and
define recursively 
$$
e^{(k)}:=
\begin{cases}
n&\text{ if }k=0,\\
\gcd(e^{(k-1)},b_k)&\text{ if }1\leq k\leq g.
\end{cases}
$$
We define the following numbers for $1\leq k\leq g$:
$$
R_k:=\frac{1}{e^{(k)}}\left(b_k e^{(k-1)} +
\sum_{j=0}^{k-2}b_{j+1}\left(e^{(j)}-e^{(j+1)}\right)\right),\qquad
r_k:= \frac{b_k+n}{e^{(k)}}.
$$ 
Note that $R_k$ admits the following recursive formula:
$$
R_k:=
\begin{cases}
n&\text{ if }k=0,\\
\frac{e^{(k-1)}}{e^{(k)}}\left(R_{k-1}+b_k-b_{k-1}\right)&\text{ if }1\leq k\leq g.
\end{cases}
$$
We end with the following definitions ${R'_0}:=n$,  $r'_0 :=2$ and
for $1\leq k\leq g$:
$$
{R'_k}:=\frac{R_k e^{(k)}}{e^{(k-1)}},\quad
r'_k := \left\lfloor  r_k e^{(k)}/e^{(k-1)} \right\rfloor +1. 
$$   
Yano defined the following polynomial with fractional powers in $t$ 
\begin{equation}\label{eq:generating}
R(n,b_1,\ldots,b_g;t):=
t+\sum_{k=1}^g t^{\frac{r_k}{R_k}}\frac{1-t}{1-t^{\frac{1}{R_k}}}
-\sum_{k=0}^g t^{\frac{r'_k}{R'_k}}\frac{1-t}{1-t^{\frac{1}{{R'_k}}}},
\end{equation}
and he  proved that  $R(n,b_1,\ldots,b_g;t)$ has non-negative coefficients.

\begin{yano}[\cite{Y82}] \hspace*{-2mm}  For almost all irreducible plane curve singularity 
germ $f:(\bc^2,0)\to (\bc,0)$  with characteristic sequence
$(n, b_1,b_2,\ldots, b_g)$, the $b$-exponents  
$\{\tilde{\beta}_1\!,\ldots,\!\tilde{\beta}_\mu\}$ 
 are given by  the generating series
\begin{equation*}
 \sum_{i=1}^{\mu} t^{\tilde{\beta}_i}=R(n,b_1,\ldots,b_g;t).
\end{equation*}
For almost all means for an open dense subset in the $\mu$-constant strata in a deformation space.  
\end{yano}  

Yano's conjecture holds for $g=1$ as it was proved
by Pi. Cassou-Nogu\`es in~\cite{Pi861} making explicitly a relation between 
two  variables improper integrals and the 
Bernstein-Sato polynomial of $f$, see also \cite{Pi862}.

In  \cite{ACLM-Yano2}, the authors,   with the same ideas, 
 were interested in the case $g=2$.
For $g=2$, the characteristic sequence $(n,b_1,b_2)$ can be written as 
$(n_1n_2, mn_2, mn_2+q)$ where $n_1,m,n_2,q\in\mathbb{Z}_{>0}$ satisfying
$$
\gcd(n_1,m)=\gcd(n_2,q)=1.
$$
In  \cite{ACLM-Yano2} we solve Yano's conjecture 
for the case
\begin{equation}\label{eq:simple_roots}
\gcd(q,n_1)=1\text{ or }\gcd(q,m)=1.
\end{equation}
The above condition is equivalent to ask for the algebraic monodromy to have distinct eigenvalues. 
In that case, the~$\mu$ $b$-exponents are all distinct and they coincide with
the opposite of roots of the reduced Bernstein-Sato polynomial (which turns out to be of degree~$\mu$).

To encode the topology of  a germ of an irreducible plane   curve singularity
$(C=\f,0)\subset (\bc^2,0)$ several sets of invariants can be used: Puiseux characteristic exponents, Puiseux  pairs, Newton pairs,
(minimal) embedded resolution graph,  Eisenbud-Neumann splice diagram, semigroup
$\Gamma_{(C,0)}\subset \bn$ generated
by all the possible intersection multiplicities $i(\{h=0\},C)$ at
$0$ for all $h\in {\mathcal O}_{(\bc^2,0)}$, etc.

Let $f:(\bc^{2},0)\to(\bc,0)$ be a germ of a non-zero holomorphic
function $f$. Let $B$ be an open ball centered at the origin. Let
$\pi : X \to B$ be an embedded resolution of $(\f,0)$. We denote
by $E_i, i\in J$, the irreducible components of
$\pi^{-1}(\f)_{\text{red}}$. 
For every  $i\in J$, let $N_i$ and $\nu_i-1$ be the multiplicities  of  $E_i$ 
in the divisor of respectively  
 $f \circ \pi$ and $\pi^*(dx\wedge dy)$ on $X$. 
One has that $N_i$ and $\nu_i$ belong to $\mathbb{N}^*$ and if $E_i$ is an irreducible component 
of the strict transform of $\f$ then  $\nu_i=1$. Denote also 
$\mathring{E}_i:= E_i \setminus
\left(\cup_{j\ne i} E_j \right)$ for $i \in J$. Then one has the following interpretation of the $R(n,b_1,\ldots,b_g;t)$ 
\begin{equation*}
R(n,b_1,\ldots,b_g;t)=
t- \sum_{i\in J,\, E_i \ne \tilde{C}} \chi(\mathring{E}_i)     t^{\nu_i/N_i}\frac{1-t}{1-t^{1/N_i}}
\end{equation*}
where  $\tilde C$ is the unique strict transform of $\f$.
For a  vertex $i$ of the minimal embedded resolution graph
 its valency $\delta_i$ is the number of 
adjacent vertices to it.  A vertex is called 
a \emph{rupture vertex} if its valency  is  at least $3$.
Most of the vertices in the resolution graph have valency $2$ and since  the corresponding exceptional divisors $E_i$
 are rational curves $\chi(\mathring{E}_i)=0$. Furthermore in this case the valency of the vertex are either $1$, $2$ or $3$.

The shape of the minimal embedded resolution graph in this case
is the same  as the  Eisenbud-Neumann splice diagram
(cf. \cite[page 49]{EN}). 
If the germ $(C,0)$ has $g$  Newton pairs
$\{(p_k,q_k)\}_{k=1}^g$
with gcd$(p_k,q_k)=1$ and $p_k\geq 2$ and $q_k\geq 1$
(and by convention, $q_1>p_1$),
define the integers $\{a_k\}_{k=1}^g$ by
$a_1:=q_1$ and $a_{k+1}:=q_{k+1}+p_{k+1}p_ka_k$ for  $k\geq 1$.
Then its  Eisenbud-Neumann splice diagram decorated by
the following splice data $\{(p_k,a_k)\}_{k=1}^g$  and has the following  shape:

\begin{figure}[ht]
\begin{center}
\iftikz
\begin{tikzpicture}[scale=.85,vertice/.style={draw,circle,fill,minimum size=0.2cm,inner sep=0}]
\coordinate (M1) at (0,0);
\coordinate (M2) at (2,0);
\coordinate (Z) at (0,1.5);
\coordinate (M3) at ($(M2)-(Z)$);
\coordinate (M4) at ($2*(M2)$);
\coordinate (M5) at ($(M4)-(Z)$);
\coordinate (M6) at ($3*(M2)$);
\coordinate (M7) at ($4*(M2)$);
\coordinate (M8) at ($5*(M2)$);
\coordinate (M9) at ($(M8)-(Z)$);
\coordinate (M10) at ($6*(M2)$);
\coordinate (M11) at ($(M10)-(Z)$);
\coordinate (M12) at ($7*(M2)$);

\draw (M1)--(M2) node[above left] {$a_1$};
\draw  (M2)  --(M3) node[pos=.3,right] {$p_1$} ;
\draw (M2)--(M4) node[above left] {$a_2$};
\draw  (M4)  --(M5) node[pos=.3,right] {$p_2$} ;
\draw (M4)--(M6);
\draw[dotted] (M6)--(M7);
\draw (M7)--(M8)node[above left] {$a_{g-1}$};
\draw  (M8)  --(M9) node[pos=.3,right] {$p_{g-1}$} ;
\draw (M8)--(M10)node[above left] {$a_{g}$};
\draw  (M10)  --(M11) node[pos=.3,right] {$p_{g}$} ;
\draw[-{[scale=2]>}] (M10)--(M12);

\node[vertice] at (M1) {};
\node[vertice] at (M2) {};
\node[vertice] at (M3) {};
\node[vertice] at (M4) {};
\node[vertice] at (M5) {};
\node[vertice] at (M8) {};
\node[vertice] at (M9) {};
\node[vertice] at (M10) {};
\node[vertice] at (M11) {};
\end{tikzpicture}
\else
\includegraphics{en}
\fi
\caption{}
\end{center}
\end{figure}

The $g$ rupture components $\tilde{E}_1,\ldots, \tilde{E}_g$, ordered from the left to the right of the resolution graph are the same as in the splice diagram and their
 numerical data can be computed inductively  from the 
\[\begin{array}{ll}
{\tilde{N}_k}:=a_k\cdot p_k\cdot p_{k+1}\cdot\ldots\cdot p_g & \mbox{for $1\leq k\leq g$};\\
\tilde{\nu}_k:=p_k\tilde{\nu}_{k-1} +q_k      & \mbox{where $\tilde{\nu}_0=1$}, \end{array}
\]

The numerical data associated to the components $g+1$ components of valency 1
$E_0,E_1, \ldots, E_g$, here $E_0$ is the most left hand side vertex corresponding to the first blow-up
and its numerical data is equal to $(N_0,\nu_0)=(n,2)$ with  $n=p_1p_2\cdots p_g$.
The numerical data associated to other valency one components can be also computed from 
\[\begin{array}{ll}
{N_k}=a_k\cdot p_{k+1}\cdot\ldots\cdot p_g   
& \mbox{for $1\leq k\leq g$};\\
{\nu}_k=\tilde{\nu}_{k-1} + \lceil \frac{q_k}{p_k} \rceil  & \mbox{for $1\leq k\leq g$}
\end{array}
\]

\subsection{Yano's conjecture for isolated germs of plane curves}
\mbox{}

A natural extension of the Yano conjecture for isolated plane curve singularity germ could be 
the following conjecture

\begin{yanoextcj}
For almost all isolated plane curve singularity germ $f:(\bc^2,0)\to (\bc,0)$ 
  with isolated singularity and Milnor number $\mu$, the $b$-exponents  
$\{\tilde{\beta}_1\!,\ldots,\!\tilde{\beta}_\mu\}$ 
 are given by  the generating series
\begin{equation}\label{conjecture1}
 \sum_{i=1}^{\mu} t^{\tilde{\beta}_i}=
t+\sum_{i}(\delta_i -2) \left( t^{\nu_i/N_i}\frac{1-t}{1-t^{1/N_i}} 
 \right),
\end{equation}
showing how $b$-exponents depends on the topology of $f$. 
\end{yanoextcj}

\begin{ejm}\label{ejm:val4}
Let 
$f(x,y)=y^4-x^6$
be  a germ  with two $\mathbb{A}_2$-singularities having
 intersection number equals 6.
The minimal embeded resolution   graph has 3 exceptional divisors $E_1, E_2,E_3$ with numerical data $(N, \nu,\delta)$ given respectively by equals $(4,2,1)$, $(6,3,1)$ and 
$(12,5,4)$.
Then \eqref{conjecture1}
 equals
$$
t+2 \left( t^{5/12}\frac{(1-t)}{(1-t^{1/12})} 
 \right)- \left( t^{2/4}\frac{(1-t)}{(1-t^{1/4})} +  t^{3/6}\frac{(1-t)}{(1-t^{1/6})}\right) 
$$
equals 
$$
t+t^{4/3}+t^{5/4}+t^{7/6}+2 t^{13/12}+2t^{11/12}+t^{5/6}+t^{3/4}+t^{2/3}+2t^{7/12}+
2t^{5/12}.
$$
Using \texttt{Singular}~\cite{DGPS} inside~\cite{sage}, a  $\mu$-constant  versal deformation of $f$ is given by 
 $g(x,y,a,b):=f + a x^3 y^2 +b x^4  y^2 $ and the  Bernstein-Sato  polynomial of $g$ for random values  of $a$ and $b$ is equal to
$$
-17/12,\!-4/3,\!-5/4,-7/6,-13/12,-1,-11/12,-5/6,-3/4,-2/3,-7/12,-5/12,
$$ 
so that they do not coincide. 

This can be 
confirmed using \texttt{checkRoot} for $s=-17/12$  of~\cite{LMM:12} 
in \texttt{Singular}~\cite{DGPS}, where the base field is $\mathbb{C}(a,b)$.
Moreover, it can be proved that for general~$a, b$ the Tjurina number equals
the expected value for Hertling-Stahlke bound, i.e., $14$; using~\cite{LP}
the values of Tjurina number are constant in these $\mu$-constant strata.
\end{ejm}

The previous example shows that the proposed conjecture may not hold 
when there are vertices with valency greater than~$3$. 
Based on the irreducible case we want to study the conjecture for
the case where valencies are at most~$3$.

\begin{yanoextcj1}
Let $\Sigma_\mu$ be the $\mu$-constant stratum of a germ $f:(\bc^2,0)\to (\bc,0)$ 
of isolated singularity, such that no eigenvalue $\zeta\neq 1$ of the monodromy is
mutiple (in particular the valency of the vertices of the resolution graph
is at most~$3$). Then the $\mu$  $b$-exponents  
$\{\tilde{\beta}_1\!,\ldots,\!\tilde{\beta}_\mu\}$ of a generic element of~$\Sigma_\mu$ 
 are given by  the generating series \eqref{conjecture1}
\end{yanoextcj1}

Most probably, the hypothesis on the monodromy can be replaced \emph{no repeated non-integral exponent of the monodromy}
as the result in~\cite{Pi88} for non-degenerate Newton polynomial germs suggests; some examples in the last section
go in the same direction. The condition on the valency seems to be more essential, due to Example~\ref{ejm:val4}.

\subsection{Singularities with non-degenerated principal part and commode}
\mbox{}

Assume that the power series $f$ has non-degenerated principal part and denote its Newton polygon at $0$ by
 $\Gamma_f$ ,  
with $\ell$ facets and commode ($\Gamma_f$ meets with $x=0$ at 
$(0,\tau_0 )$ and with $y=0$ at 
$(\sigma_0,0)$).  
We also assume that the set $\spec(f)$ of  spectral numbers are distinct.

Assume that  $f_i(x,y)=1$, with $f_i(x,y)=\frac{c_i x+d_i y}{n_i},$
is the equation of the facet $F_i$ of  $\Gamma_f$ so that  
$\gcd(c_i, d_i,n_i)=1$, $1\leq i\leq \ell$.

Set
$$\mathcal{N}=\{q\in \bq:\, \sigma_0 q\in \bn\, or\, \tau_0 q \in \bn\,\}.
$$
Let $b_f$ be the monic polynomial such that its roots are the rational numbers
$\sigma_{i,k}:=-\frac{c_i+d_i+k}{n_i}:$ with $0\leq k< n_i$ and for all facet $F_i$ such that $\sigma_{i,k}\notin \mathcal{N}.$

\begin{thm}[{\cite[Theorem~1]{Pi88}}]\label{nondeg}
 For almost all germs of plane curves  which have $\Gamma_f$ as
Newton polygon at the origin and all non-integral elements in $\spec(f)$ are distinct  then $f$ admits $b_f$ as Bernstein-Sato polynomial.   
\end{thm}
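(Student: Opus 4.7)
My plan is to follow the improper-integral strategy developed by the second-named author for the one-Puiseux-pair case in~\cite{Pi861,Pi862} and adapt it to the toric setting, since the non-degeneracy and commodity hypotheses make the relevant computations entirely explicit. The starting point is the toric embedded resolution $\pi:X\to(\bc^2,0)$ associated with the dual fan of $\Gamma_f$: its exceptional divisors are in bijection with the facets $F_1,\dots,F_\ell$, the divisor attached to $F_i$ has numerical data $(N_i,\nu_i)=(n_i,c_i+d_i)$, and commodity forces the two axes $\{x=0\}$ and $\{y=0\}$ to appear as components with data $(\sigma_0,1)$ and $(\tau_0,1)$. Kashiwara's theorem applied to this resolution restricts the roots of $b_{f,0}$ to the arithmetic progressions $-\tfrac{c_i+d_i+k}{n_i}$ together with $-m/\sigma_0$ and $-m/\tau_0$, and Saito's bound~\eqref{eq:saito} cuts the first family to the range $0\leq k<n_i$; the candidates coming from the axes are integer and either equal $-1$ or lie in $\mathcal{N}$, so they are absorbed by the factor $(s+1)$ and by the structure of $\mathcal{M}$.

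The core of the argument is to analyse the family of two-variable integrals
\[
I_\phi(s)=\int_{\bc^2}|f(x,y)|^{2s}\,\phi(x,y)\,dx\wedge dy\wedge d\bar{x}\wedge d\bar{y},
\]
for monomial test forms $\phi=x^a y^b\,\chi(|x|,|y|)$ with $\chi$ a smooth cutoff. Pulling back to the toric resolution, each chart expresses $I_\phi(s)$ as a product of one-variable Mellin/Beta integrals whose pole structure is entirely explicit: the exceptional component attached to $F_i$ contributes simple poles at the $\sigma_{i,k}$, while each axis contributes the corresponding integer poles. On a rupture chart where two exceptional components, or an exceptional component and an axis, meet, simultaneous poles combine and the order can a priori jump from $1$ to $2$. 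A direct residue computation shows that the jump to order $2$ occurs precisely when $\sigma_{i,k}\in\mathcal{N}$, since that is exactly the condition for the $\sigma_{i,k}$-pole of $E_i$ to collide with an integer pole coming from an axis; summing the contributions of all charts then collapses these would-be double poles, and they do not force $\sigma_{i,k}$ into $b_{f,0}$. Outside $\mathcal{N}$ the pole stays simple, and its residue is a polynomial in the coefficients of $f$ supported on $\Gamma_f$; irreducibility of the space of non-degenerate germs with Newton polygon $\Gamma_f$ makes the non-vanishing locus of each such polynomial Zariski-dense, which produces all the roots of $b_f$ for generic~$f$.

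To close the proof, I would invoke Malgrange's identification of $\tilde{b}_{f,0}(s)$ with the minimal polynomial of $-\partial_t t$ on $F={\tilde{H}_0^{''}}/\partial_t^{-1}{\tilde{H}_0^{''}}$, together with $\dim F=\mu(f,0)$. The hypothesis that the non-integral elements of $\spec(f)$ are pairwise distinct forces $-\partial_t t$ to have simple spectrum on the non-unipotent part of $F$, so that no Jordan block of size $\geq 2$ can inflate the multiplicity of a candidate beyond one. Combined with the generic realisation from the previous paragraph, this matches the set of roots produced by the integrals with the set defining $b_f$, and the bookkeeping with the integer candidates and the factor $(s+1)$ is compatible with Malgrange's prescribed degree.

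The main analytic obstacle is the rupture-chart analysis of the second paragraph: demonstrating that the a priori double poles at $\sigma_{i,k}\in\mathcal{N}$ systematically collapse, while the simple poles outside $\mathcal{N}$ persist with non-zero residue for generic Newton coefficients, requires a delicate combinatorial bookkeeping over the lattice points of $\Gamma_f$. This is precisely the step where the distinctness of the non-integral exponents of the monodromy is indispensable, since repeated exponents could produce hidden cancellations among residues coming from different facets that share a common candidate, which would invalidate the genericity argument.
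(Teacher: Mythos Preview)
The paper does not prove this theorem: it is quoted verbatim from~\cite{Pi88} and used as a black box (see the sentence immediately following the statement and the corollary drawn from it). There is therefore no proof in the present paper to compare your attempt against. Your sketch is in the spirit of the integral methods of~\cite{Pi861,Pi862,Pi88}, so the overall strategy is reasonable as an outline of what the cited reference does, but as written it is not a proof and contains a real gap.

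The gap is in your third paragraph. You argue that pairwise distinct non-integral elements of $\spec(f)$ force $-\partial_t t$ to have simple spectrum on the non-unipotent part of $F$, and hence that the minimal polynomial $\tilde b_{f,0}$ has the expected degree and simple roots. This conflates two different sets of $\mu$ rational numbers: the spectral numbers (defined through the Hodge filtration on $H^{n-1}(F_{f,0},\bc)$) and the $b$-exponents (the eigenvalues of $-\partial_t t$ on $\tilde H_0''/\partial_t^{-1}\tilde H_0''$). They have the same reduction $\bmod\ \bz$, but distinctness of the former does not a priori give distinctness of the latter; indeed, establishing that for generic $f$ the $b$-exponents are exactly the numbers $\sigma_{i,k}\notin\mathcal N$ is the content of the theorem, not an input to it. Likewise, your second paragraph asserts that the residues at $\sigma_{i,k}\notin\mathcal N$ are polynomials in the Newton coefficients which are generically non-zero, but gives no mechanism for this non-vanishing; in the actual argument of~\cite{Pi88} this is the technical heart and requires explicit computation facet by facet, not an appeal to irreducibility of the parameter space alone. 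Finally, the numerical data you attach to the coordinate axes, $(\sigma_0,1)$ and $(\tau_0,1)$, are not correct: in the toric resolution the axes are not exceptional and carry $N=0$ along the strict transform when $f$ is commode, so the set $\mathcal N$ arises from a different bookkeeping than the one you describe.
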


Note that Example~\ref{ejm:val4} does not satisfy the hypotheses of the above theorem.
The minimal embeded resolution   graph of  germs  in Theorem~\ref{nondeg}  has all 
exceptional divisors  of valencies exactly $1,2$ and $3$.
There are at most $2$ divisors with valency $1$ and $\ell$ divisors of valency $3$.  
For all  $1\leq i\leq \ell$, let $E_i$ be the corresponding  divisor has  numerical data $(N_i, \nu_i,\delta_i)=(n_i,c_i+d_i,3)$. 
So that the roots in this case appear as  in the EN-diadram of the germ.  
So that a generic equisingular deformation of $f$ admits $b_f$ as Bernstein-Sato polynomial.  

If two spectral numbers are congruent $\bmod\, \bz$, their difference is $\pm 1$, and they correspond
to a $2$-Jordan block of the monodromy, so we can recover the $b$-exponents from
the Bernstein-Sato polynomial.

\begin{cor}
If the germ $f$ is Newton non-degenerated with respect to its Newton polygon, commode and all 
the spectral numbers are distinct then for a generic equisingular deformation of $f$
the $b$-exponents are given by~\eqref{conjecture1}.
\end{cor}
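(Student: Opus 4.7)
The plan is to deduce this corollary directly from Theorem~\ref{nondeg} together with an algebraic matching between the roots of $b_f$ and the exponents of the generating series~\eqref{conjecture1}.

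\emph{Step 1: Apply Theorem~\ref{nondeg}.} The hypotheses of the corollary are exactly those of the theorem, so for a generic equisingular deformation~$g$ of~$f$ the Bernstein-Sato polynomial equals $(s+1)\tilde b_g(s)$ with $\tilde b_g$ monic whose roots are the rationals $\sigma_{i,k}=-(c_i+d_i+k)/n_i$, $0\le k<n_i$, such that $\sigma_{i,k}\notin\mathcal{N}$.

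\emph{Step 2: Pass from roots of $b_f$ to $b$-exponents.} Recall that $\tilde b_{f,0}(s)$ is the minimal polynomial of $-\partial_t t$ on the $\mu$-dimensional space~$F$, while the $b$-exponents are the $\mu$ roots of its characteristic polynomial. By the remark immediately preceding the corollary, two spectral numbers that are congruent $\bmod\,\mathbb{Z}$ necessarily differ by~$\pm 1$ and give rise to a $2$-Jordan block of the monodromy; under the hypothesis that all spectral numbers are distinct, no such block occurs, so $-\partial_t t$ is semisimple with $\mu$ distinct eigenvalues. The minimal and characteristic polynomials therefore coincide, and the multiset of $b$-exponents is exactly $\{1\}\cup\{-\sigma_{i,k}:\sigma_{i,k}\notin\mathcal{N}\}$.

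\emph{Step 3: Identify the resolution data.} The discussion after Theorem~\ref{nondeg} tells us that the minimal embedded resolution graph of such a germ has exactly $\ell$ rupture vertices, one per facet $F_i$ of $\Gamma_f$, of valency~$3$ and numerical data $(N_i,\nu_i)=(n_i,c_i+d_i)$; all other vertices have valency~$1$ or~$2$, the valency-$1$ vertices being the two ends associated to the coordinate axes (present because the germ is commode) together with the strict transforms. Since valency-$2$ vertices contribute $\delta_i-2=0$, the right-hand side of~\eqref{conjecture1} reduces to
\[
t+\sum_{i=1}^\ell t^{(c_i+d_i)/n_i}\,\frac{1-t}{1-t^{1/n_i}}\;-\;\sum_{j\in J_1} t^{\nu_j/N_j}\,\frac{1-t}{1-t^{1/N_j}},
\]
where $J_1$ is the set of valency-$1$ vertices.

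\emph{Step 4: Term-by-term matching.} Using
\[
t^{(c_i+d_i)/n_i}\,\frac{1-t}{1-t^{1/n_i}}=\sum_{k=0}^{n_i-1} t^{(c_i+d_i+k)/n_i},
\]
the rupture contributions produce precisely the exponents $-\sigma_{i,k}$ for all $0\le k<n_i$ and all facets. The main content of the argument is that the valency-$1$ subtractions cancel exactly the exponents $-\sigma_{i,k}$ with $\sigma_{i,k}\in\mathcal{N}$: the two end vertices associated with the axes have the data $(\sigma_0,1)$ and $(\tau_0,1)$, whose expansions produce the rationals with denominator dividing $\sigma_0$ or $\tau_0$, i.e., exactly~$\mathcal{N}$; the strict-transform vertices (for which $\nu_j=1$) contribute exponents that combine with the rupture expansion to eliminate the remaining redundancies. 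After these cancellations the surviving exponents are $\{1\}\cup\{-\sigma_{i,k}:\sigma_{i,k}\notin\mathcal{N}\}$, which coincides with the multiset of $b$-exponents from Step~2.

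The main obstacle is the bookkeeping in Step~4: one must verify carefully, facet by facet and end by end, that the valency-$1$ contributions to~\eqref{conjecture1} remove exactly the rationals that Theorem~\ref{nondeg} excludes via~$\mathcal{N}$. Once this combinatorial identity is checked (it follows from the standard description of the toric resolution of a commode Newton non-degenerated germ, giving numerical data $(n_i,c_i+d_i)$ at the rupture divisors and $(\sigma_0,1),(\tau_0,1)$ at the two axis-ends), the corollary is immediate.
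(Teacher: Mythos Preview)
Your overall strategy is the same as the paper's implicit one (invoke Theorem~\ref{nondeg}, use the remark preceding the corollary to pass from the Bernstein--Sato polynomial to the $b$-exponents, and match the facet data with the resolution data). However, two of your steps contain genuine errors.

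\textbf{Step 2.} The inference ``all spectral numbers distinct $\Rightarrow$ no $2$-Jordan block $\Rightarrow$ $-\partial_t t$ semisimple with $\mu$ distinct eigenvalues'' is not valid. The remark says that two spectral numbers congruent modulo~$\mathbb{Z}$ differ by $\pm 1$ and correspond to a $2$-Jordan block; but two \emph{distinct} spectral numbers can perfectly well be congruent modulo~$\mathbb{Z}$ (for instance $\alpha$ and $\alpha+1$), so the hypothesis of the corollary does not exclude Jordan blocks. Example~4.2 of the paper ($y^5+x^2y^2+x^7$) satisfies the hypotheses of the corollary yet has $-1$ as a double eigenvalue. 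The paper's point is rather that, because any multiple eigenvalue comes from a pair $\alpha,\alpha+1$ and a single $2$-Jordan block, the Jordan structure together with the Bernstein--Sato polynomial determines the full multiset of $b$-exponents; it is not asserting semisimplicity.

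\textbf{Step 4.} Your numerical data for the valency-$1$ exceptional divisors are wrong: they do \emph{not} have $\nu=1$. Already for the cusp $y^2-x^3$ (so $\tau_0=2$, $\sigma_0=3$) the two end divisors carry $(N,\nu)=(2,2)$ and $(3,3)$, not $(2,1)$ and $(3,1)$; in general the divisor arising from the first blow-up always has $\nu=2$. In addition, the sum in~\eqref{conjecture1} is taken over exceptional divisors only (compare the formula for $R(n,b_1,\dots,b_g;t)$, which explicitly excludes the strict transform), so strict transforms should not appear in your set~$J_1$. Consequently the expansion you propose, ``rationals with denominator dividing $\sigma_0$ or $\tau_0$'', is not what the end divisors contribute, and the cancellation against~$\mathcal{N}$ does not take place as you describe. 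The actual term-by-term matching requires the correct $(N,\nu)$ of the valency-$1$ divisors in the minimal resolution (coming from the continued-fraction refinement of the Newton fan) and is where the real combinatorial work lies; the paper itself only asserts this match without carrying it out.
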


\section{Improper integrals}

Most of the results in this section come from~\cite{ACLM-Yano2}. We start with
$1$-variable improper integrals.

\begin{prop}\label{prop:int_una_var1} 
Let $f:[0,1]\times\bc\to\bc$ be
an analytic function. Then the function
\[
s\mapsto\int_0^1 f(t,s) t^s \frac{dt}{t}
\]
is holomorphic on $\Re s>0$ and admits a meromorphic continuation to~$\bc$ with poles
contained in $\bz_{\leq 0}$. Moreover, if $f(t,s)$ is algebraic whenever $t$ is algebraic and $s$ rational,
then, the residues are algebraic.
\end{prop}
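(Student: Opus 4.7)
The plan is to expand $f$ in its Taylor series in the $t$-variable at $t=0$ and peel off the simple poles that arise when integrating $t^{k+s-1}$ over $(0,1]$.

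First, for holomorphy on $\{\Re s > 0\}$: since $f$ is continuous on the compact set $[0,1] \times K$ for every compact $K \subset \bc$, it is bounded there, and $|t^s|\, dt/t = t^{\Re s - 1}\, dt$ is integrable on $(0,1]$. So the integral converges absolutely and uniformly on compact subsets of $\{\Re s > 0\}$; holomorphy follows by dominated convergence together with the holomorphy of $s \mapsto t^s$ for each fixed $t \in (0,1]$ (Morera, or differentiation under the integral sign).

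For the meromorphic continuation, fix an integer $N \geq 1$ and Taylor expand in $t$:
\[
f(t,s) = \sum_{k=0}^{N-1} a_k(s)\, t^k + t^N R_N(t,s),
\]
with $a_k(s) := \frac{1}{k!}\partial_t^k f(0,s)$ entire in $s$ and $R_N$ analytic on $[0,1]\times\bc$. Using $\int_0^1 t^{k+s-1}\,dt = \frac{1}{k+s}$ (valid initially for $\Re s > -k$) I would write
\[
\int_0^1 f(t,s)\, t^s\, \frac{dt}{t} = \sum_{k=0}^{N-1} \frac{a_k(s)}{s+k} + \int_0^1 R_N(t,s)\, t^{s+N-1}\, dt.
\]
The remainder integral is holomorphic on $\{\Re s > -N\}$ by the argument of the first step applied to $R_N$, and the finite sum is meromorphic on $\bc$ with at most simple poles at $s = 0, -1, \ldots, -(N-1)$. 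Since both sides agree on $\{\Re s > 0\}$, the right-hand side provides a meromorphic extension to $\{\Re s > -N\}$. Letting $N$ tend to infinity yields a meromorphic continuation to the whole plane, with poles contained in $\bz_{\leq 0}$, and with residue at $s = -k$ equal to $a_k(-k) = \frac{1}{k!}\partial_t^k f(0,-k)$.

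For the algebraicity of the residues, the point is that the Taylor coefficients $a_k(s)$ inherit algebraicity from $f$. In the intended applications, the hypothesis ``$f(t,s) \in \overline{\bq}$ whenever $t \in \overline{\bq}$ and $s \in \bq$'' arises from $f$ being built from algebraic operations, so $f$ satisfies a polynomial relation $P(f,t,s)=0$ over $\bq$; formally substituting the power series $\sum_k a_k(s) t^k$ and matching coefficients in $t$ then gives polynomial equations over $\bq(s)$ satisfied by the $a_k(s)$, recursively in $k$. Hence each $a_k(s)$ is algebraic over $\bq(s)$, and its specialization at rational $s=-k$ lies in $\overline{\bq}$.

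The analytic continuation step is entirely standard; the main subtlety, and the one that will have to be watched carefully in later applications to Bernstein--Sato polynomials, is the algebraicity bookkeeping through the successive toric changes of variables coming from the embedded resolution which bring the concrete improper integrals into the normal form $\int_0^1 f(t,s)\, t^s\, dt/t$ treated here.
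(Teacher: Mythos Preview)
The paper does not actually prove this proposition: the section opens with ``Most of the results in this section come from~\cite{ACLM-Yano2}'' and the statement is given without argument. So there is no in-paper proof to compare against, only the reference.

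Your argument for the meromorphic continuation is the standard one and is correct: Taylor-expand $f$ in $t$ to order $N$, integrate the polynomial part term by term to produce the explicit simple-pole contributions $a_k(s)/(s+k)$, and observe that the remainder integral $\int_0^1 R_N(t,s)\,t^{s+N-1}\,dt$ is holomorphic on $\{\Re s>-N\}$ by the same domination estimate you used for the half-plane $\{\Re s>0\}$. The identification of the residue at $s=-k$ as $a_k(-k)=\frac{1}{k!}\partial_t^k f(0,-k)$ is exactly what is used later in the paper when residues are computed (compare the residue computations in Example~\ref{ejm:34-35}).

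On the algebraicity clause you are right to be cautious. The literal hypothesis ``$f(t,s)\in\overline{\bq}$ whenever $t\in\overline{\bq}$ and $s\in\bq$'' is weaker than what your argument actually uses, namely that $f$ satisfies a polynomial relation $P(f,t,s)=0$ over~$\bq$; an analytic function taking algebraic values at algebraic arguments need not, in general, have algebraic Taylor coefficients. Your move to the stronger hypothesis is the honest one, and it is precisely what holds in every application in this paper and in~\cite{ACLM-Yano2}: after the toric changes of variables the integrand is visibly an algebraic function with coefficients in the field~$\mathbb{K}$ generated by the parameters of the deformation, so the recursive determination of the $a_k(s)$ from the minimal polynomial does go through. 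The proposition as stated is somewhat informal on this point, and your caveat matches its intended use.
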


If the function~$f$ is independent of $s$, then the above function will be denoted by $G_f(s)$.
Let us consider now the $2$-variable case.

\begin{prop}\label{Essou1a} Let $f\in\br[x,y]$ such that $f>0$ in~$[0,1]^2$ and let
$a_1,b_1,a_2,b_2\in\mathbb{Z}_{\geq 0}$ (by convention $\frac{b_i}{a_i}=+\infty$ if $a_i=0$). The function 
\[
s\mapsto\int_0^1  \int_0^1  f(x,y)^s x^{a_1s+b_1} y^{a_2s+b_2} \frac{dx}{x} \frac{dy}{y}.
\]
is holomorphic in $\Re s>\max\left(-\frac{b_1}{a_1},-\frac{b_2}{a_2}\right)$ and admits
 a meromorphic continuation on $\bc$, where the set of poles is a subset of 
$S=\left\{ -\frac{b_1+\nu_1}{a_1}, \, \, \nu_1 \in \mathbb{Z}_{\geq 0} \,\right\} \cup 
\left\{ -\frac{b_2+\nu_2}{a_2}, \, \, \nu_2\in \mathbb{Z}_{\geq 0} \,\right\}$.
\end{prop}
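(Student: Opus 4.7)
My plan is to extend the $1$-variable technique of Proposition~\ref{prop:int_una_var1} by iterated integration by parts, applied first in $x$ and then in $y$. The crucial input is that the strict positivity of $f$ on the compact square $[0,1]^2$ makes $1/f$ (and hence $f_x/f$, $f_y/f$) a smooth function, so the exponent of $f^{s}$ never has to be decreased in the process; at each step one merely picks up a $1$-variable boundary integral and a double integral whose power of $x$ (respectively $y$) has been improved by one.

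\emph{Step 1 (holomorphy on the initial half-plane).} On $[0,1]^2$ one has $0<m\le f\le M$, so $|f^{s}|=f^{\Re s}$ is uniformly bounded on compact sets of $s$. The integrand is therefore dominated by a constant multiple of $x^{a_1\Re s+b_1-1}y^{a_2\Re s+b_2-1}$, which is Lebesgue integrable exactly when $\Re s>\max(-b_1/a_1,-b_2/a_2)$; dominated convergence and Morera then give holomorphy on that half-plane.

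\emph{Step 2 ($x$-integration by parts and reduction to $1$-variable boundary integrals).} The identity
\[
\tfrac{d}{dx}\bigl(f^{s}x^{a_1s+b_1}\bigr)=s\,\tfrac{f_x}{f}\,f^{s}x^{a_1s+b_1}+(a_1s+b_1)\,f^{s}x^{a_1s+b_1-1},
\]
integrated in $x$ over $[0,1]$ (the boundary term at $x=0$ vanishes in the initial region) and then in $y$, gives
\[
I(s)=\tfrac{1}{a_1s+b_1}\bigl[A_0(s)-s\,I_1(s)\bigr],
\]
where $A_0(s)=\int_0^1 f(1,y)^{s}y^{a_2s+b_2-1}\,dy$ is a $1$-variable integral and $I_1(s)$ has the same shape as $I(s)$ but with $b_1\mapsto b_1+1$ and the constant multiplier $1$ replaced by the smooth function $f_x/f$. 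Iterating $k$ times produces a decomposition
\[
I(s)=\sum_{j=0}^{k-1}\frac{A_j(s)}{\prod_{i=0}^{j}(a_1s+b_1+i)}+R_k(s),
\]
where each $A_j$ is a $1$-variable integral along $\{x=1\}$ and $R_k$ is a double integral with $x^{a_1s+b_1+k-1}$, hence holomorphic on $\{\Re s>\max(-(b_1+k)/a_1,\,-b_2/a_2)\}$. The one-variable version of the same IBP argument applied to each $A_j$ (a variant of Proposition~\ref{prop:int_una_var1} adapted to the exponent $y^{a_2s+b_2-1}$) produces a meromorphic continuation of each $A_j$ to all of $\bc$ with poles contained in $\{-(b_2+\nu_2)/a_2:\nu_2\in\bz_{\ge 0}\}$.

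\emph{Step 3 ($y$-integration by parts and conclusion).} To push $R_k(s)$ past the line $\Re s=-b_2/a_2$, I apply the same procedure in~$y$, obtaining
\[
R_k(s)=\sum_{j=0}^{k'-1}\frac{B_j(s)}{\prod_{i=0}^{j}(a_2s+b_2+i)}+R'_{k,k'}(s),
\]
with each $B_j$ a $1$-variable integral in $x$ (meromorphic on $\bc$, poles in $\{-(b_1+\nu_1)/a_1\}$) and $R'_{k,k'}$ holomorphic on $\{\Re s>\max(-(b_1+k)/a_1,-(b_2+k')/a_2)\}$. Letting $k,k'\to\infty$ lets the remainder cover any prescribed left half-plane, giving the meromorphic continuation of $I(s)$ to all of $\bc$ with poles contained in $S$. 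The main technical difficulty that I anticipate is bookkeeping rather than analysis: after each IBP the multiplier of $f^{s}$ becomes a polynomial in $s$ with coefficients built from iterated logarithmic derivatives of $f$, so the induction must actually be run on the stable class of integrals $\int_0^1\!\int_0^1 P(x,y,s)\,f^{s}x^{a_1s+B_1-1}y^{a_2s+B_2-1}\,dx\,dy$ with $P$ polynomial in $s$ and smooth in $(x,y)$, and one must check that the $s$-polynomial factors appearing in the numerators are entire and so do not enlarge the pole set beyond~$S$.
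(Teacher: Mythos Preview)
The paper does not actually prove Proposition~\ref{Essou1a}: it is quoted from~\cite{ACLM-Yano2}, as the opening line of the section indicates. So there is no in-paper argument to compare against directly. Judged on its own, your integration-by-parts scheme is correct: positivity of $f$ on $[0,1]^2$ makes all the logarithmic derivatives $\partial_x^{k}(\log f)$ smooth, the class of integrals $\int\!\!\int P(x,y,s)f^{s}x^{a_1s+B_1-1}y^{a_2s+B_2-1}\,dx\,dy$ with $P$ polynomial in $s$ and smooth in $(x,y)$ is stable under your IBP step, and the only $s$-singular factors produced are the explicit $\prod_i(a_1s+b_1+i)^{-1}$ and $\prod_i(a_2s+b_2+i)^{-1}$, while the boundary integrals along $x=1$ or $y=1$ are handled by the one-variable result after the linear change $\tilde s=a_is+b_i$.

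It is worth noting that the companion statement, Proposition~\ref{continuation}, expresses the residues through $\partial_x^{\nu_1}f^{\alpha}(0,y)$ and $\partial_y^{\nu_2}f^{\alpha}(x,0)$, which points to the approach used in~\cite{ACLM-Yano2}: subtract a Taylor jet of $f^{s}$ at $x=0$ (respectively $y=0$) and integrate the polynomial part explicitly. That method and yours are formally equivalent, but they place the bookkeeping in different spots. The Taylor-jet subtraction makes the residues at $-\tfrac{b_1+\nu_1}{a_1}$ visible immediately as integrals of $\partial_x^{\nu_1}f^{\alpha}(0,y)$, whereas your IBP pushes the singular information into the factors $1/(a_1s+b_1+i)$ and leaves boundary contributions on $\{x=1\}$ that are individually harmless. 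For the bare continuation statement of Proposition~\ref{Essou1a} either route is fine; if you also want to recover the explicit residue formulas of Proposition~\ref{continuation}, the Taylor-expansion version is more direct.
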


We can be more explicit on those poles.

\begin{prop}\label{continuation} 
With the hypotheses of Proposition{\rm~\ref{Essou1a}}, let $\alpha\in S$.

\begin{enumerate}[label=\rm(P\arabic*)]
\item If $\alpha=-\frac{b_1+\nu_1}{a_1}$ for some $\nu_1\in\bz_{\geq 0}$ and $\alpha\neq -\frac{b_2+\nu_2}{a_2}$
$\forall\nu_2\in\bz_{\geq 0}$, then the pole is of order at most one and its residue equals
\begin{equation*}
\frac{1}{\nu_1! a_1} 
G_{ h_{\nu_1,\alpha,x}}(a_2\alpha+b_2),\quad
h_{\nu_1,\alpha,x}(y):=\frac{\partial^{\nu_1} f^\alpha}{\partial x^{\nu_1}}(0,y).
\end{equation*}
\item If $\alpha=-\frac{b_2+\nu_2}{a_2}$ for some $\nu_2\in\bz_{\geq 0}$ and $\alpha\neq -\frac{b_1+\nu_1}{a_1}$
$\forall\nu_1\in\bz_{\geq 0}$, then the pole is of order at most one and its residue equals
\begin{equation*}
\frac{1}{\nu_2! a_2} 
G_{ h_{\nu_2,\alpha,y}}(a_1\alpha+b_1),\quad
h_{\nu_2,\alpha,y}(x):=\frac{\partial^{\nu_2} f^\alpha}{\partial y^{\nu_2}}(x,0).
\end{equation*}
\item If $\alpha=-\frac{b_1+\nu_1}{a_1}=-\frac{b_2+\nu_2}{a_2}$ for some $\nu_1,\nu_2\in\mathbb{Z}_\geq 0$,
then the pole is of order at most~$2$ and the coefficient of $(s-\alpha)^{-2}$ in the Laurent expansion is
$$
\frac{1}{\nu_1! \nu_2!a_1a_2}
\frac{\partial^{\nu_1+\nu_2} f^{\alpha}}{\partial x^{\nu_1}\partial y^{\nu_2}}(0,0).
$$
\item If in the previous situation the pole is of order at most one, then
the continuation of the functions $G_{ h_{\nu_1,\alpha,x}}$ and $G_{ h_{\nu_2,\alpha,y}}$
are holomorphic at $a_2\alpha+b_2$ and $a_1\alpha+b_1$, respectively and its residue equals
\begin{gather*}
\frac{1}{\nu_1! a_1} 
G_{ h_{\nu_1,\alpha,x}}(a_2\alpha+b_2)+\frac{1}{\nu_2! a_2} 
G_{ h_{\nu_2,\alpha,y}}(a_1\alpha+b_1).
\end{gather*}
\end{enumerate}
\end{prop}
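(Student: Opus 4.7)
The plan is to reduce the two-variable integral in Proposition~\ref{Essou1a} to one-variable integrals by Taylor expansion in one or both variables, and then to apply Proposition~\ref{prop:int_una_var1} (for the $y$-integration with exponent $a_2s+b_2-1$) to obtain the meromorphic continuations of the inner integrals. The poles of $I(s)$ at $\alpha\in S$ then come exclusively from the explicit factors $(a_1s+b_1+j)^{-1}$ and $(a_2s+b_2+k)^{-1}$ produced by integrating the monomials $x^{a_1s+b_1+j-1}$ and $y^{a_2s+b_2+k-1}$ that arise in the Taylor expansion.

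For~(P1) (and symmetrically~(P2)), I would expand
\begin{equation*}
f(x,y)^s \;=\; \sum_{j=0}^{\nu_1}\frac{x^j}{j!}\partial_x^j f^s(0,y)\;+\;x^{\nu_1+1}\,R(x,y,s),
\end{equation*}
with $R$ analytic, so that $I(s)=\sum_{j=0}^{\nu_1}J_j(s)/\bigl(j!(a_1s+b_1+j)\bigr)+H(s)$, where $J_j(s)=\int_0^1\partial_x^j f^s(0,y)\,y^{a_2s+b_2-1}\,dy$ and $H$ is holomorphic at~$\alpha$ (because the $x^{\nu_1+1}$ factor in the remainder produces only poles at $\{-(b_1+j)/a_1\}_{j>\nu_1}$, disjoint from~$\alpha$ in case~(P1)). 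Under~(P1), Proposition~\ref{prop:int_una_var1} guarantees that each $J_j$ is regular at~$\alpha$, so the unique simple pole comes from the $j=\nu_1$ factor, with residue $J_{\nu_1}(\alpha)/(\nu_1!a_1)=G_{h_{\nu_1,\alpha,x}}(a_2\alpha+b_2)/(\nu_1!a_1)$. Case~(P3) uses the same expansion: now $J_{\nu_1}(s)$ itself has a simple pole at~$\alpha$ with residue $\partial_x^{\nu_1}\partial_y^{\nu_2} f^\alpha(0,0)/(\nu_2!a_2)$, so the product $J_{\nu_1}(s)/(a_1s+b_1+\nu_1)$ produces the stated coefficient of $(s-\alpha)^{-2}$, while the $j<\nu_1$ terms give at most simple poles and so contribute nothing to the double-pole coefficient.

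For~(P4), where the double-pole coefficient vanishes, I would refine the decomposition by the symmetric four-piece split $f^s=Q_1+Q_2+Q_3+Q_4$ with
\begin{equation*}
Q_1=f^s-P^x-P^y+P^{xy},\quad Q_2=P^x-P^{xy},\quad Q_3=P^y-P^{xy},\quad Q_4=P^{xy},
\end{equation*}
where $P^x$, $P^y$ denote the partial Taylor expansions of $f^s$ in $x$ and $y$ of orders $\nu_1$, $\nu_2$, and $P^{xy}$ is the joint Taylor polynomial of bidegree $(\nu_1,\nu_2)$. A coefficient comparison shows that $Q_1$ is $O(x^{\nu_1+1}y^{\nu_2+1})$, so its integral is holomorphic at~$\alpha$. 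The piece~$Q_2$ reduces to a sum $\sum_{j\leq\nu_1}\tilde J_j(s)/(j!(a_1s+b_1+j))$ where the Taylor-in-$y$ subtraction up to order~$\nu_2$ built into $\tilde J_j$ makes each $\tilde J_j$ holomorphic at~$\alpha$; only $j=\nu_1$ contributes, and, after combining with the simple-pole cross-contributions from the $(\nu_1,k)$-entries of $Q_4$ for $k<\nu_2$, recombines into $G_{h_{\nu_1,\alpha,x}}(a_2\alpha+b_2)/(\nu_1!a_1)$ via the defining formula for the regularized value of~$G$. The piece~$Q_3$ yields the symmetric $G$-term by the same argument.

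The main obstacle is this last bookkeeping step in~(P4): one must verify that the various explicit rational-function contributions coming from $Q_2$, $Q_3$, and the off-diagonal entries of~$Q_4$ recombine exactly into the two $G$-values claimed by the proposition, and that the $(\nu_1,\nu_2)$-entry of $Q_4$ contributes no residual simple-pole term under the hypothesis $\partial_x^{\nu_1}\partial_y^{\nu_2} f^\alpha(0,0)=0$. This is a finite algebraic verification once the telescoping structure of the Taylor-with-remainder formula is organized correctly, but it requires careful attention to the regularization of each $G_h$ at its potential poles and to the distinction between $s$-dependent integrands $\partial_x^j f^s(0,y)$ in the $J_j$ and their fixed-$s$ specializations $h_{\nu_1,\alpha,x}$ appearing in the $G$-values.
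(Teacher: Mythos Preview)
Your treatment of (P1)--(P3) is correct and is essentially the argument that lies behind the reference~\cite{ACLM-Yano2}: Taylor-expand in one variable, integrate the polynomial part explicitly to produce the factors $(a_is+b_i+j)^{-1}$, and apply Proposition~\ref{prop:int_una_var1} to the resulting one-variable integrals. The paper does not spell this out; it simply cites~\cite{ACLM-Yano2} for (P1)--(P3) and asserts that (P4) ``can be deduced easily'', so there is no detailed paper proof to compare with.

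For (P4), however, the verification you flag as ``the main obstacle'' does not go through, and the reason is not organizational: the stated formula is missing a term. In your four-piece split, the $(\nu_1,\nu_2)$-entry of $Q_4$ is
\[
\frac{\phi(s)}{\nu_1!\,\nu_2!\,(a_1s+b_1+\nu_1)(a_2s+b_2+\nu_2)}
=\frac{\phi(s)}{\nu_1!\,\nu_2!\,a_1a_2\,(s-\alpha)^2},
\qquad \phi(s):=\partial_x^{\nu_1}\partial_y^{\nu_2}f^{s}(0,0).
\]
The hypothesis of (P4) gives $\phi(\alpha)=0$, but then this entry still contributes a \emph{simple} pole with residue $\dfrac{\phi'(\alpha)}{\nu_1!\,\nu_2!\,a_1a_2}$, which has no reason to vanish. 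A concrete check: take $f(x,y)=1+x+y+2xy$, $a_1=a_2=1$, $b_1=b_2=0$, $\alpha=-1$, $\nu_1=\nu_2=1$. Then $\phi(s)=s(s+1)$, so $\phi(-1)=0$ and $\phi'(-1)=-1$. One computes $G_{h_{1,-1,x}}(-1)=G_{h_{1,-1,y}}(-1)=\tfrac{3}{2}$, so the formula in (P4) predicts residue~$3$; but a direct computation (e.g.\ via your own $Q_1,\dots,Q_4$ decomposition) gives residue~$2$, the discrepancy being exactly $\phi'(-1)=-1$. Thus the correct residue in case (P4) is
\[
\frac{1}{\nu_1!\,a_1}G_{h_{\nu_1,\alpha,x}}(a_2\alpha+b_2)
+\frac{1}{\nu_2!\,a_2}G_{h_{\nu_2,\alpha,y}}(a_1\alpha+b_1)
+\frac{1}{\nu_1!\,\nu_2!\,a_1a_2}\,
\frac{d}{ds}\Big|_{s=\alpha}\partial_x^{\nu_1}\partial_y^{\nu_2}f^{s}(0,0),
\]
and your bookkeeping would have led you to this once you tracked the $(\nu_1,\nu_2)$-term rather than trying to show it vanishes. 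Everything else in your outline for (P4) --- the holomorphy of the $Q_1$-integral, the recombination of the $Q_2$-residue with the $(\nu_1,k)$-entries of $Q_4$ into $G_{h_{\nu_1,\alpha,x}}$, and the symmetric statement for $Q_3$ --- is correct.
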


The last result does not appear in~\cite{ACLM-Yano2} but it can be deduced easily.
The following lemma is useful for the residue computations.

\begin{lema}\label{beta}
Let $p\in \mathbb{N}$ and $c\in \mathbb{R}_{>0}$. 
Given $s_1, s_2\in \bc$ 
  such that  $-\alpha=s_1+s_2>0$
then 
\begin{equation}
G_{ \left( y^{p}+c\right)^\alpha}(p s_1)+
G_{ \left(1 +c x^{p}\right)^\alpha}(ps_2)=
\frac{c^{-s_2}}{p}\boldsymbol{B}\left( s_1, s_2\right)
\end{equation}
where $\boldsymbol{B}$ is the \emph{beta function}.
\end{lema}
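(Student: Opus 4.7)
The plan is to unfold both $G$-integrals, reduce each of them to substitutions of a common form, merge them into a single integral over $(0,\infty)$, and recognize Euler's classical representation of $\boldsymbol{B}$.

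First, by definition
\[
G_{(y^p+c)^\alpha}(ps_1)=\int_0^1(y^p+c)^\alpha\, y^{ps_1}\,\tfrac{dy}{y},\qquad
G_{(1+cx^p)^\alpha}(ps_2)=\int_0^1(1+cx^p)^\alpha\, x^{ps_2}\,\tfrac{dx}{x}.
\]
The substitutions $u=y^p$ and $v=x^p$ turn these into
\[
\tfrac{1}{p}\int_0^1(u+c)^\alpha u^{s_1}\tfrac{du}{u}\quad\text{and}\quad
\tfrac{1}{p}\int_0^1(1+cv)^\alpha v^{s_2}\tfrac{dv}{v}.
\]

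Next, in the second integral I would apply $v=1/w$. Using $(1+cv)^\alpha=w^{-\alpha}(w+c)^\alpha$, $v^{s_2}=w^{-s_2}$, $\frac{dv}{v}=-\frac{dw}{w}$, and the key identity $-\alpha-s_2=s_1$ coming from the hypothesis $-\alpha=s_1+s_2$, the transformed integrand is exactly $(w+c)^\alpha w^{s_1}\frac{dw}{w}$, now on $[1,\infty)$. Hence the two pieces glue into the single integral
\[
\frac{1}{p}\int_0^\infty(w+c)^\alpha w^{s_1}\,\tfrac{dw}{w}.
\]
Rescaling by $w=cr$ extracts a factor $c^{\alpha+s_1}=c^{-s_2}$ (again using $-\alpha=s_1+s_2$) and produces
\[
\frac{c^{-s_2}}{p}\int_0^\infty(1+r)^\alpha r^{s_1}\,\tfrac{dr}{r}=\frac{c^{-s_2}}{p}\boldsymbol{B}(s_1,s_2),
\]
by Euler's integral representation $\boldsymbol{B}(s_1,s_2)=\int_0^\infty r^{s_1-1}(1+r)^{-s_1-s_2}\,dr$.

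The only delicate point is the domain of validity. All the integral manipulations above are literally justified only where the intermediate integrals converge absolutely, which occurs on the strip $\Re s_1>0$, $\Re s_2>0$ (the convergence at $0$ and at $\infty$ of the merged integral, together with Euler's integral, require precisely these conditions). However, both sides of the asserted identity extend meromorphically to the whole of $\bc^2$: the left-hand side by Proposition~\ref{prop:int_una_var1} applied to the one-variable analytic functions $(t^p+c)^\alpha$ and $(1+ct^p)^\alpha$, the right-hand side by the classical theory of $\boldsymbol{B}$. The identity then propagates to all admissible $(s_1,s_2)$ by analytic continuation. This last step is the only one that really requires care, but it is routine given the meromorphy statement of Proposition~\ref{prop:int_una_var1}.
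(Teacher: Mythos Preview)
Your argument is correct: the substitutions $u=y^p$, $v=x^p$, the inversion $v=1/w$ (which uses $-\alpha-s_2=s_1$ exactly as you say), and the rescaling $w=cr$ combine the two pieces into Euler's integral for $\boldsymbol{B}(s_1,s_2)$, and the meromorphic continuation via Proposition~\ref{prop:int_una_var1} is the right way to pass from the strip $\Re s_1,\Re s_2>0$ to general $s_1,s_2$ with $s_1+s_2=-\alpha$.

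The paper itself does not give a proof of this lemma; it is stated as one of the results imported from~\cite{ACLM-Yano2} (``Most of the results in this section come from~\cite{ACLM-Yano2}''). Your write-up therefore supplies what the paper omits, and the method you use---splitting $\int_0^\infty$ at $1$ and inverting one half---is the standard one for this identity.
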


In \cite{ACLM-Yano2}, we proceeded as follows. For a fixed equisingularity type, we consider 
\emph{generic} polynomial representatives~$f$ with real algebraic coefficients, in some field~$\mathbb{K}$, and such that
for a suitable semi-algebraic compact domain~$\mathcal{D}$, we had $f>0$ in $\mathcal{D}\setminus\{(0,0)\}$
(the origin is in the boundary of~$\mathcal{D}$). For a special choice of coordinates and a \emph{weight}
 function~$g$ we consider the following integrals
\begin{equation}\label{eq:Ifg} 
\mathcal{I}(f,g,\beta_1,\beta_2,\beta_3)(s):=\int_{\mathcal{D}} f(x,y)^s x^{\beta_1} y^{\beta_2} g(x,y)^{\beta_3}
\frac{dx}{x}\frac{dy}{y}
\end{equation}
where $\beta_1,\beta_2,\beta_3+1\in\bz_{>0}$. 
These integrals are holomorphic in a semiplane of~$\bc$ and admitted a meromorphic continuation 
(see Example~\ref{ejm:34-35} for an idea of the proof). The knowledge of the residues allowed
us to prove the following theorem.

\begin{thm}\label{pole-integral-root}
Let $f\in \mathbb{K}[x,y]$ be as above. 
Let $\alpha$ be a pole of  $\mathcal{I}_{}(f,\beta_1,\beta_2,\beta_3)(s)$  with transcendental residue,  
and such that $\alpha+1$ is not a pole of $\mathcal{I}_{}(f,\beta'_1,\beta'_2,\beta'_3)(s)$ 
for any $(\beta'_1,\beta'_2,\beta'_3)$. Then $\alpha$ is a root of
the Bernstein-Sato polynomial  $b_{f}(s)$ of $f$.
\end{thm}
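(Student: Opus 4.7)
The plan is to feed the Bernstein--Sato functional equation into the integral $\mathcal{I}(f,\beta_1,\beta_2,\beta_3)(s)$ and then compare residues at $s=\alpha$, exploiting the algebraic/transcendental dichotomy. Concretely, pick $P\in\mathcal{D}[s]$ realising $b_f(s)\,f^s=P(s,x,\partial)\cdot f^{s+1}$, multiply this identity by the form
\[
\omega:=x^{\beta_1}y^{\beta_2}g(x,y)^{\beta_3}\,\frac{dx}{x}\frac{dy}{y},
\]
and integrate over $\mathcal{D}$ in a right half-plane $\Re s\gg 0$ where the integrals are absolutely convergent. The left-hand side becomes $b_f(s)\,\mathcal{I}(f,\beta_1,\beta_2,\beta_3)(s)$, so the whole task is to understand the right-hand side.

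Applying Stokes' theorem with the formal adjoint $P^{\dagger}$ to transfer all derivatives from $f^{s+1}$ onto $\omega$, and expanding $P^{\dagger}\omega$ as a finite $\mathbb{K}[s]$-linear combination of monomial forms $x^{\beta'_1}y^{\beta'_2}g^{\beta'_3}\frac{dx}{x}\frac{dy}{y}$ (possible because $g$ is polynomial and the exponents are rational), one obtains, after meromorphic continuation,
\[
b_f(s)\,\mathcal{I}(f,\beta_1,\beta_2,\beta_3)(s)=\sum_{\gamma}c_\gamma(s)\,\mathcal{I}\bigl(f,\beta^{(\gamma)}_1,\beta^{(\gamma)}_2,\beta^{(\gamma)}_3\bigr)(s+1)+B(s),
\]
where $c_\gamma(s)\in\mathbb{K}[s]$ and $B(s)$ collects the contributions from the semi-algebraic boundary $\partial\mathcal{D}$. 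Each face of $\partial\mathcal{D}$ reduces the boundary integral, after an algebraic parametrisation, to a one-variable improper integral of the type handled by Proposition~\ref{prop:int_una_var1}; hence $B(s)$ is meromorphic on $\mathbb{C}$ and, since $\alpha$ is rational (poles of $\mathcal{I}$ lie in $\mathbb{Q}$ by Proposition~\ref{Essou1a}), every residue of $B$ at $s=\alpha$ is algebraic over $\mathbb{K}$.

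To conclude I take residues at $s=\alpha$. By the hypothesis on $\alpha+1$, every term $\mathcal{I}(f,\beta^{(\gamma)}_1,\beta^{(\gamma)}_2,\beta^{(\gamma)}_3)(s+1)$ in the finite sum is holomorphic at $s=\alpha$, so that sum contributes nothing to the residue. Therefore
\[
b_f(\alpha)\cdot\operatorname{Res}_{s=\alpha}\mathcal{I}(f,\beta_1,\beta_2,\beta_3)(s)=\operatorname{Res}_{s=\alpha}B(s).
\]
The left-hand side is $b_f(\alpha)\in\mathbb{Q}$ times a transcendental number (by the standing hypothesis on $\alpha$), while the right-hand side is algebraic by the previous paragraph. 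The only way both can coincide is $b_f(\alpha)=0$, which is what we want.

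The delicate step is the control of $B(s)$: one must verify, face by face of $\partial\mathcal{D}$ and for every monomial produced by $P^{\dagger}\omega$, that the resulting one-variable boundary integral is really of the form covered by Proposition~\ref{prop:int_una_var1} (so that its residue at $s=\alpha$ is algebraic) and not itself a source of transcendental contributions. This is a geometric check relying on the specific choice of adapted coordinates and of the weight $g$ used in~\cite{ACLM-Yano2}; once this is settled, the residue comparison is formal.
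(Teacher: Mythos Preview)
The paper itself does not prove Theorem~\ref{pole-integral-root}; it is quoted from~\cite{ACLM-Yano2}, so there is no in-paper argument to compare against. Your outline---feed the functional equation into the integral, integrate by parts via the formal adjoint, and then separate algebraic from transcendental contributions to the residue at $s=\alpha$---is exactly the mechanism used there, and the reduction of the boundary term $B(s)$ to one-variable integrals governed by Proposition~\ref{prop:int_una_var1} is the correct picture.

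One point does need tightening. You pick $P\in\mathcal{D}[s]$ with $\mathcal{D}$ the ring of \emph{germs} of holomorphic differential operators at~$0$, so that $b_f=b_{f,0}$ is the local Bernstein--Sato polynomial. But then $P$ has merely holomorphic coefficients on a small complex ball around the origin, which need not contain the integration domain, and $P^{\dagger}\omega$ is not a polynomial; in particular it cannot be expanded as a \emph{finite} $\mathbb{K}[s]$-linear combination of the monomial forms $x^{\beta'_1}y^{\beta'_2}g^{\beta'_3}\frac{dx}{x}\frac{dy}{y}$ as you assert, and the boundary integrals would then involve non-algebraic data, spoiling the algebraic/transcendental dichotomy. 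The argument in~\cite{ACLM-Yano2} uses instead the \emph{polynomial} functional equation (Bernstein's theorem over $\mathbb{K}[x,y]$), for which $P$ has coefficients in $\mathbb{K}[x,y,s]$: then $P^{\dagger}\omega$ really is a polynomial over $\mathbb{K}[s]$, the Stokes step is valid on the whole compact semi-algebraic domain, and the restriction to each boundary face is genuinely of the shape covered by Proposition~\ref{prop:int_una_var1}. With this correction the residue comparison goes through exactly as you describe.
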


\section{Partial proof of the conjecture}

We are going to prove the modified extended conjecture when the number of rupture vertices
is small.

\begin{thm}
The extended Yano's conjecture holds for germs of plane
curve singularities with no multiple eigenvalues
of the monodromy (except maybe~$1$), and such that
there are at most two rupture vertices and their valency
is at most~$3$.
\end{thm}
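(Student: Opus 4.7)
The plan is to extend the improper-integral strategy of \cite{ACLM-Yano2} from the irreducible two-Puiseux-pair case to all germs whose minimal embedded resolution has vertices of valency at most~$3$ with at most two rupture vertices, under the no-multiple-eigenvalue hypothesis. Fix a generic representative $f\in\mathbb{K}[x,y]$ of the equisingularity class inside~$\Sigma_\mu$, with $\mathbb{K}$ a real algebraic number field. The hypothesis that no eigenvalue $\zeta\neq 1$ of the monodromy is multiple ensures that the exponents appearing on the right-hand side of~\eqref{conjecture1} are pairwise distinct and, moreover, pairwise non-congruent modulo~$\bz$ (apart from the integer $\tilde\beta=1$ that corresponds to the standard factor $s+1$ of $b_{f,0}$). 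The theorem therefore reduces to showing that for each candidate exponent~$\tilde\beta$ of~\eqref{conjecture1}, the number $-\tilde\beta$ belongs to $R_{f,0}$.

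First I would fix a vertex~$E_i$ of the resolution graph with $\delta_i\in\{1,3\}$ and place~$f$ in coordinates adapted to a toric chart centred at~$E_i$, so that $f$ acquires a convenient Newton non-degenerate form on a compact semi-algebraic domain $\mathcal{D}\subset[0,1]^2$ meeting both coordinate axes, with $f>0$ on $\mathcal{D}\setminus\{(0,0)\}$. In the two-rupture case the construction is performed independently at each rupture vertex, using a sequence of toric blow-ups between them to absorb the contribution of the first when analysing the second, so that the two local pictures decouple. For each exponent $\alpha=-\tilde\beta$ attached to~$E_i$ I would then exhibit nonnegative integers $(\beta_1,\beta_2,\beta_3)$ and a polynomial weight~$g$ such that the integral $\mathcal{I}(f,g,\beta_1,\beta_2,\beta_3)(s)$ of~\eqref{eq:Ifg} has $\alpha$ as a pole.

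By Proposition~\ref{continuation}(P1)--(P2) combined with Lemma~\ref{beta}, the residue at~$\alpha$ is a non-zero rational multiple of a beta value $\boldsymbol{B}(s_1,s_2)$ at positive rational arguments, hence transcendental. The simple-eigenvalue hypothesis guarantees that no other candidate exponent of~\eqref{conjecture1} differs from~$\alpha$ by an integer, so $\alpha+1$ is not a pole of any $\mathcal{I}(f,g',\beta_1',\beta_2',\beta_3')$; Theorem~\ref{pole-integral-root} then yields $\alpha\in R_{f,0}$. Performing this construction for every exponent of~\eqref{conjecture1} produces $\mu$ pairwise distinct roots of $b_{f,0}(s)$; by Malgrange's theorem $\tilde b_{f,0}$ has degree at most~$\mu$, so these roots are exactly the $b$-exponents, confirming the generating series~\eqref{conjecture1}.

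The main obstacle is the two-rupture-vertex case. Part~(P3) of Proposition~\ref{continuation} warns that if a pole coincides with the simultaneous vanishing of two denominators in~\eqref{eq:Ifg}, an order-two pole appears and the transcendence/non-shift argument breaks down. The no-multiple-eigenvalue hypothesis is designed precisely to prevent such coincidences between exponents attached to the two different rupture vertices, but verifying it requires tracking the numerical data $(N_i,\nu_i)$ along the chain of toric modifications joining the two rupture divisors and checking vertex by vertex that their candidate exponents have pairwise distinct fractional parts. A secondary technical step is that, for exponents whose position in~\eqref{conjecture1} involves a cancellation between a rupture contribution ($\delta_i=3$) and a leaf contribution ($\delta_i=1$), one must verify that the integral-theoretic residue actually realises the arithmetic cancellation predicted by the combinatorics of~\eqref{conjecture1}; this case analysis is what confines the present technique to at most two rupture vertices.
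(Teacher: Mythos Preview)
Your proposal shares the high-level paradigm of the paper (improper integrals, transcendental residues, Theorem~\ref{pole-integral-root}), but the actual organization of the paper's argument is quite different. The paper does not attempt a uniform construction ``at each rupture vertex''; instead it first \emph{classifies} all equisingularity types satisfying the hypotheses into three families: \ref{S1}~the resolution graph is linear; \ref{S2}~two one-Puiseux-pair branches with prescribed intersection number, plus possibly one or two smooth components; \ref{S3}~the resolution graph coincides with that of a two-Puiseux-pair irreducible germ. Case~\ref{S1} is disposed of immediately by the non-degenerate Newton-polygon result of~\cite{Pi88} (Theorem~\ref{nondeg} above) and involves no integral computation at all. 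Cases~\ref{S2} and~\ref{S3} are handled by writing down explicit normal forms, namely $x^\epsilon y^\eta\bigl((y^m-x^n)^2-x^u y^v\bigr)$ and $x^\epsilon y^\eta f g^\gamma$ with $\epsilon,\eta,\gamma\in\{0,1\}$, and then literally repeating the change-of-variable and residue computations already carried out in~\cite{ACLM-Yano2}; the extra monomial factors do not disturb that strategy. The paper also invokes Varchenko's semicontinuity~\cite{V80} to cope with the possibility that $1$ is a multiple eigenvalue in case~\ref{S2}, an ingredient absent from your outline.

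The substantive gap in your proposal is the ``decoupling'' step. You assert that the two rupture vertices can be treated independently after a chain of toric blow-ups absorbs the first when analysing the second; the paper neither claims nor uses such a local-to-global principle, and indeed presents the restriction to at most two rupture vertices as a genuine technical limitation rather than an artefact. The residues of $\mathcal{I}$ are global on the resolution, and their identification with a single beta value via Lemma~\ref{beta} depends on the explicit one-edge Newton data of the normal forms in~\ref{S2}--\ref{S3}, not on anything intrinsic to an arbitrary rupture vertex. Your claim that the simple-eigenvalue hypothesis forces $\alpha+1$ not to be a pole of any $\mathcal{I}(f,g',\beta_1',\beta_2',\beta_3')$ is also too quick: the pole set of these integrals is a union of arithmetic progressions $\{-(\nu_i+k)/N_i\}_{k\ge 0}$ and will typically contain $\alpha+1$; what one actually verifies in~\cite{ACLM-Yano2}, case by case for the explicit normal forms, is either that the residue at $\alpha+1$ is algebraic or that the conclusion can be rescued by semicontinuity.
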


\begin{proof}[Sketch of the proof]
As we have seen in Example~\ref{ejm:val4}, the valency condition and the 
non-existence of multiple values distinct from~$1$ seem to be essential.
The condition of $1$ or $2$ branching vertices is only technical.

There are three types of such singularities.

\begin{enumerate}[label=(S\arabic{enumi})]
\item\label{S1} The resolution graph is linear.
\item\label{S2} The germ is the product of two irreducible germs with one-Puiseux pair $(m,n)$ and 
intersection number $>mn$, and eventually two smooth branches with intersection numbers $m,n$ with the singular
branches.
\item\label{S3} The resolution graph coincides with the one of a two-Puiseux pair irreducible (which is part of the germ).
\end{enumerate}

The case \ref{S1} is a consequence of \cite[Theorem~1]{Pi88}. 
The case \ref{S2} is represented by the $\mu$-constant versal deformation of
$f=x^\epsilon y^\eta ((y^m-x^n)^2-x^u y^v)$, where $\epsilon,\eta\in\{0,1\}$ and $u,v$ depend on the intersection
number of the two singular branches. We omit the cases where there are multiple eigenvalues distinct from~$1$.
We follow the strategy in \cite{ACLM-Yano2}. The presence of $x,y$ does not affect  this strategy
as we explain later for~\ref{S3}. 
If there are more than $2$ branches, $1$ is a multiple eigenvalue of the monodromy. Nevertheless,
the only point where this condition is needed is for Varchenko's lower semicontinuity~\cite{V80} and only eigenvalues
distinct from~$1$ cannot be multiple for this result.

Let us finish with \ref{S3}.
Let us consider the improper integral $\mathcal{I}(f,g,\beta_1,\beta_2,\beta_3)$ of~\eqref{eq:Ifg},
studied in \cite{ACLM-Yano2},
where $\beta_1,\beta_2,\beta_3+1\in\mathbb{Z}_{>0}$, $f,g$ are real polynomials positive on $[0,1]^2\setminus\{(0,0)\}$, $f$ is a $2$-Puiseux-pair germ singularity for 
which the Newton polygone is of type $(y^m\pm x^n)^p$, $g$ is a $1$-Puiseux pair singularity with Newton polygone
$y^m\pm x^n$ and maximal contact with~$f$. For~\ref{S3} we replace~$f$ by $x^\epsilon y^\eta f g^{\gamma}$, 
$\epsilon,\eta,\gamma\in\{0,1\}$. We repeat the process as in~\cite{ACLM-Yano2}.
\end{proof}

\section{Computations on examples with multiple eigenvalues}

\begin{ejm}
Let us consider $f(x,y)=y^5+x^2 y^2+x^5$; its $\mu$-constant miniversal deformation is a singleton, so its Bernstein-Sato
polynomial coincides with the generic one. This singularity does not satisfy
\cite[Theorem 1]{Pi88} since the exponents  $\pm\frac{1}{10},\pm\frac{3}{10}$ appear twice ($\pm\frac{1}{2}$ appear only once). Using \texttt{Singular}, the Bernstein polynomial is
\[
\left(s + \frac{1}{2}\right)^2 \left(s + \frac{7}{10}\right) \left(s + \frac{9}{10}\right) \left(s + 1\right) \left(s + \frac{11}{10}\right) \left(s + \frac{13}{10}\right).
\]
\begin{figure}[ht]
\iftikz
\begin{tikzpicture}[scale=.9,vertice/.style={draw,circle,fill,minimum size=0.2cm,inner sep=0}]
\coordinate (M1) at (0,0);
\coordinate (M2) at (2,0);
\coordinate (Z) at (0,1.5);
\coordinate (M3) at ($(M2)+(Z)$);
\coordinate (M4) at ($2*(M2)$);
\coordinate (M6) at ($3*(M2)$);
\coordinate (M7) at ($3*(M2)+(Z)$);
\coordinate (M8) at ($4*(M2)$);

\draw node[below] {$(5,3)$} (M1)--(M2);
\draw[-{[scale=2]>}]  (M2) node[below] {$(10,5)$}  -- (M3) ;
\draw (M2)--(M4) node[below] {$(4,2)$};
\draw (M4)--(M6) node[below] {$(10,5)$};
\draw[-{[scale=2]>}] (M6)--(M7);
\draw (M6)--(M8)node[below] {$(5,3)$};

\node[vertice] at (M1) {};
\node[vertice] at (M2) {};
\node[vertice] at (M4) {};
\node[vertice] at (M6) {};
\node[vertice] at (M8) {};
\end{tikzpicture}
\else
\includegraphics[scale=1]{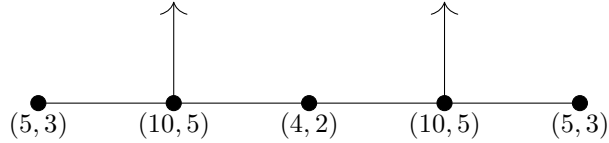}
\fi
\caption{Resolution graph of $y^5+x^2 y^2 +x^5$ with $(N,\nu)$-data.}
\label{fig:acampo23-23}
\end{figure}
The extended  conjecture is satisfied even though we are not in the hypotheses of the modified one.
\end{ejm}

\begin{ejm}
Let us consider $f(x,y)=y^5+x^2 y^2+x^7$; its $\mu$-constant versal deformation is also a singleton, so its Bernstein
polynomial coincides with the generic one. This singularity \emph{does} satisfy
\cite[Theorem 1]{Pi88} since $\pm\frac{1}{2}$ appear as exponents of the monodromy, even though 
$\exp\left(2i\pi\frac{\pm1}{2}\right)=-1$ is a double eigenvalue.  
Using \texttt{Singular}, 
we can confirm the expected Bernstein-Sato polynomial.
\end{ejm}

\begin{ejm}\label{ejm:34-35}
Let us consider $f(x,y)=x^{3} y^{3} + x^{7} + y^{8}$;
a $\mu$-constant versal deformation is given by $f_{t,s}(x,y):=x^{3} y^{3} + x^{7} + t x^{6} y + s x y^{7} + y^{8}$.
As in the previous example the hypotheses  of \cite[Theorem~1]{Pi88} are satisfied and hence the
extended conjecture holds; note that there are multiple eigenvalues for the monodromy but
the exponents of the monodromy are distinct. 

\begin{figure}[ht]
\begin{center}
\iftikz
\begin{tikzpicture}[scale=.9,vertice/.style={draw,circle,fill,minimum size=0.2cm,inner sep=0}]
\coordinate (M1) at (0,0);
\coordinate (M2) at (2,0);
\coordinate (M0) at ($-1*(M2)$);
\coordinate (Z) at (0,1.5);
\coordinate (M3) at ($(M2)+(Z)$);
\coordinate (M4) at ($2*(M2)$);
\coordinate (M6) at ($3*(M2)$);
\coordinate (M7) at ($4*(M2)+(Z)$);
\coordinate (M8) at ($4*(M2)$);
\coordinate (M9) at ($5*(M2)$);

\draw (M0)node[below] {$(7,3)$} --(M1) ;
\draw node[below] {$(14,6)$} (M1)--(M2);
\draw[-{[scale=2]>}]  (M2) node[below] {$(21,7)$}  -- (M3) ;
\draw (M2)--(M4) node[below] {$(6,2)$};
\draw (M4)--(M6) node[below] {$(15,5)$};
\draw[-{[scale=2]>}] (M8)--(M7);
\draw (M6)--(M8)node[below] {$(24,8)$};
\draw (M6)--(M9)node[below] {$(8,3)$};

\node[vertice] at (M0) {};
\node[vertice] at (M1) {};
\node[vertice] at (M2) {};
\node[vertice] at (M4) {};
\node[vertice] at (M6) {};
\node[vertice] at (M8) {};
\node[vertice] at (M9) {};
\end{tikzpicture}
\else
\includegraphics[scale=1]{acampo3435}
\fi
\end{center}
\end{figure}

Yano's candidates start at $\frac{1}{3}=\frac{7}{21}$ and $\frac{1}{3}=\frac{8}{24}$.
The particular Bernstein-Sato polynomials
may depend on $s,t$; let us study some jumps using improper integrals.
Choose $t,s\in\mathbb{R}_{\geq 0}$;
note that $f_{t,s}>0$ in $[0,1]^2\setminus\{(0,0)\}$.
Let us denote, for $\beta_1,\beta_2\in\bz_{\geq 1}$:
\[
\mathcal{I}_{\beta_1,\beta_2}=\int_{[0,1]^2}
f_{t,s}(x,y)^s x^{\beta_1} y^{\beta_2}\frac{dx}{x}\frac{dy}{y}
\]
Let us decompose this square in two domains:
\[
\{(x,y)\in[0,1]^2\mid x^{\frac{4}{3}}\leq y\leq 1\},\quad
\{(x,y)\in[0,1]^2\mid 0\leq y\leq x^{\frac{4}{3}}\}.
\]
Integrating on each subdomain we decompose $\mathcal{I}_{\beta_1,\beta_2}=
\mathcal{I}_{1,\beta_1,\beta_2}+\mathcal{I}_{2,\beta_1,\beta_2}$.

With suitable change of variables, we have:
Let us consider the change of variables $x\mapsto x y^3$, $y\mapsto y^4$:
\begin{gather*}
x\mapsto x y^3,\quad y\mapsto y^4\Longrightarrow
\mathcal{I}_{1,\beta_1,\beta_2}=4\int_{[0,1]^2}
\tilde{f}_{t,s}(x,y)^s x^{\beta_1} y^{3\beta_1+4\beta_2+21s} \frac{dx}{x}\frac{dy}{y}
\end{gather*}
where
\begin{gather*} 
\tilde{f}_{t,s}(x,y):=t x^{6} y + s x y^{10} + x^{7} + x^{3} + y^{11}.
\end{gather*}
In the same way
is $x\mapsto x^{3}$, $y\mapsto x^4 y$;
\[
x\mapsto x^{3},\quad y\mapsto x^4 y\Longrightarrow
\mathcal{I}_{2,\beta_1,\beta_2}=3\int_{\mathcal{D}_2}
{f}^*_{t,s}(x,y)^s x^{3\beta_1+4\beta_2+21s} y^{\beta_2} \frac{dx}{x}\frac{dy}{y}.
\]
where
\begin{gather*} 
{f}^*_{t,s}(x,y):=t x y + s x^{10} y^{7} + x^{11} y^{8} + y^{3} + 1.
\end{gather*}
Note that $I_{2,\beta_1,\beta_2}$ satisfies the hypotheses of Proposition~\ref{Essou1a}, which was the goal
of these changes of variables. Since it is not the case for $I_{1,\beta_1,\beta_2}$, let 
us perform a decomposition of the square as
\[
\{(x,y)\in[0,1]^2\mid 0\leq y\leq x^{\frac{3}{11}}\},\quad
\{(x,y)\in[0,1]^2\mid x^{\frac{3}{11}}\leq y\leq 1\},
\]
and denote the corresponding integral decomposition as $I_{1,\beta_1,\beta_2}=I_{1,1,\beta_1,\beta_2}+
I_{1,2,\beta_1,\beta_2}$. Suitable changes of variables yield:
\[
x\mapsto x^{11},\ y\mapsto x^3 y\Longrightarrow
\mathcal{I}_{1,1,\beta_1,\beta_2}=\!44\!\!\int_{\mathcal{D}}
\!\hat{f}_{t,s}(x,y)^s x^{4(5\beta_1+3\beta_2+24s)} y^{3\beta_1+4\beta_2+21s} \frac{dx}{x}\!\frac{dy}{y},
\]
where
\begin{gather*} 
\hat{f}_{t,s}(x,y):=t x^{36} y + s x^{8} y^{10} + x^{44} + y^{11} + 1,
\end{gather*}
and 
\[
x\mapsto x y^{11},\quad y\mapsto y^3\Longrightarrow
\mathcal{I}_{1,2,\beta_1,\beta_2}=12\int_{\mathcal{D}}
\check{f}_{t,s}(x,y)^s x^{\beta_1} y^{{4(5\beta_1+3\beta_2+24s)}} \frac{dx}{x}\frac{dy}{y},
\]
where
\begin{gather*} 
\check{f}_{t,s}(x,y):=t x^{6} y^{36} + s x y^{8} + x^{7} y^{44} + x^{3} + 1.
\end{gather*}
The candidate pole $-\frac{8}{21}$ can be pole only for $\beta_1=\beta_2=1$, and in this
case the residue is
\begin{gather*}
\frac{44}{21}\int_0^1\frac{\partial\hat{f}^{-\frac{8}{21}}}{\partial y}(x,0) x^{-\frac{32}{7}}\frac{dx}{x}
+
\frac{3}{21}\int_0^1\frac{\partial\check{f}^{-\frac{8}{21}}}{\partial x}(0,y) y^{}\frac{dy}{y}=\\
-\frac{8\cdot 44 t}{21^2}\int_0^1(1+x^{44})^{-\frac{29}{21}} x^{\frac{220}{7}}\frac{dx}{x}
-\frac{3\cdot 8 t}{21^2}\int_0^1(1+y^{3})^{-\frac{29}{21}} y^{2}\frac{dy}{y}=\\
-\frac{8 t}{21^2}\int_0^1(1+u)^{-\frac{29}{21}} u^{\frac{5}{7}}\frac{du}{u}
-\frac{ 8 t}{21^2}\int_0^1(1+u)^{-\frac{29}{21}} u^{\frac{2}{3}}\frac{du}{u}=
-\frac{ 8 t}{21^2}\boldsymbol{B}\left(\frac{5}{7},\frac{2}{3}\right).
\end{gather*}
Hence, for $t\neq 0$ (and algebraic), we have that $-\frac{8}{21}$ is a root of the Bernstein-Sato polynomial. 
Note that we can prove that $-\frac{29}{21}$ is a pole of $\mathcal{I}_{7,2}$ with transcendental
residue for any (algebraic) value of $t,s$. In particular, $-\frac{29}{21}$ is a root of the
Bernstein polynomial if $t=0$ and $s$ is algebraic after Theorem~\ref{pole-integral-root}.
Note that  $-\frac{8}{21}$  and $-\frac{29}{21}$ cannot be simultaneously roots
of the Bernstein-Sato polynomial, since $\exp\left(-2i\pi\frac{8}{21}\right)=\exp\left(-2i\pi\frac{29}{21}\right)$
is a simple eigenvalue of the monodromy. 
These results are confirmed by \texttt{Singular} and \texttt{checkRoot}. We have then proved that there is a function $f_0$ in 
the $\mu$-constant stratum such that $-\frac{8}{21}$ is not a root of Bernstein-Sato polynomial for  $f_0$, compare with~\cite{ACLM-Yano2a}
\end{ejm}

\begin{ejm}
Let us consider $f_\pm(x,y):=(x^4-y^3)^2+x^{6} y^2$ which corresponds to the case \ref{S3}.
A $\mu$-constant versal deformation is given by $f_{\mathbf{t}}(x,y)=f_{\pm}(x,y)+t_1 x^8 y+t_2 x^9$.
Let $\mathcal{D}:=\{(x,y)\in[0,1]^2\mid 0\leq y\leq x^{\frac{4}{3}}\}$ and for $t,s\in\br_{\geq 0}$, consider
\[
\mathcal{I}_{\beta_1,\beta_2,\beta_3}:=
\int_{\mathcal{D}} f_{\mathbf{t}}(x,y)^s x^{\beta_1} y^{\beta_2} (x^4-y^3)^{\beta_3} \frac{dx}{x}\frac{dy}{y}
\]
for $\beta_1,\beta_2,\beta_3+1\in\bz_{>0}$. In order to check that it is holomorphic with
meromorphic continuation, we perform a first change of variable:
\[
x\mapsto\! x^3, y\mapsto x^4(1-y)\Longrightarrow
\mathcal{I}_{\beta_1,\beta_2,\beta_3}\!\!=\!\!3\!\!\!
\int_{[0,1]^2}\!\!\! \tilde{f}_{\mathbf{t}}(x,y)^s x^{3\beta_1+4\beta_2+12\beta_3+24s} y^{\beta_3+1} q(y) \frac{dx}{x}\!\frac{dy}{y}
\]
where $q(y):=(1-y)^{\beta_2-1} (3-3y+y^2)^{\beta_3}$ and 
\[
\tilde{f}_{\mathbf{t}}(x,y)=y^2(3-3 y+y^2)^2+x^{2} (1-y)^2+ t_1 x^{4} (1-y)+t_2 x^{3}.
\]
\begin{figure}[ht]
\begin{center}
\iftikz
\begin{tikzpicture}[scale=.9,vertice/.style={draw,circle,fill,minimum size=0.2cm,inner sep=0}]
\coordinate (M1) at (0,0);
\coordinate (M2) at (2,0);
\coordinate (M0) at ($-1*(M2)$);
\coordinate (Z) at (0,1.5);
\coordinate (M3) at ($(M2)+(Z)$);
\coordinate (M4) at ($2*(M2)$);
\coordinate (M6) at ($0.5*(M2)+2*(Z)$);
\coordinate (M7) at ($1.5*(M2)+2*(Z)$);
\coordinate (M8) at ($4*(M2)$);
\coordinate (M9) at ($5*(M2)$);

\draw (M0)node[below] {$(8,3)$} --(M1) ;
\draw node[below] {$(16,5)$} (M1)--(M2);
\draw  (M2) node[below] {$(24,7)$}  -- (M3) node[right] {$(26,8)$};
\draw (M2)--(M4) node[below] {$(6,2)$};
\draw[-{[scale=2]>}] (M3)--(M6);
\draw[-{[scale=2]>}] (M3)--(M7);

\node[vertice] at (M0) {};
\node[vertice] at (M1) {};
\node[vertice] at (M2) {};
\node[vertice] at (M3) {};
\node[vertice] at (M4) {};
\end{tikzpicture}
\else
\includegraphics[scale=1]{tangente-23-1}
\fi
\caption{}
\label{fig:tangente-23-1}
\end{center}
\end{figure}
Decomposing the square in two triangles with the diagonal line, we can decompose 
$\mathcal{I}_{\beta_1,\beta_2,\beta_3}=\mathcal{I}_{1,\beta_1,\beta_2,\beta_3}+\mathcal{I}_{2,\beta_1,\beta_2,\beta_3}$;
with the following changes of variables we obtain
\[
x\mapsto x, y\mapsto x y\Longrightarrow
\mathcal{I}_{1,\beta_1,\beta_2,\beta_3}\!\!=\!\!3\!\!
\int_{[0,1]^2}\! \hat{f}_{\mathbf{t}}(x,y)^s x^{3\beta_1+4\beta_2+13\beta_3+1+26s} y^{\beta_3+1} q(xy) \frac{dx}{x}\frac{dy}{y}
\]
and $x\mapsto x y,\ y\mapsto y\Longrightarrow$:
\[
\mathcal{I}_{2,\beta_1,\beta_2,\beta_3}\!\!=\!\!3\!\!
\int_{[0,1]^2}\! \check{f}_{\mathbf{t}}(x,y)^s x^{3\beta_1+4\beta_2+12\beta_3+24s} y^{3\beta_1+4\beta_2+13\beta_3+1+26s} q(y) \frac{dx}{x}\frac{dy}{y},
\]
where
\begin{gather*} 
\hat{f}_{\mathbf{t}}(x,y)=y^2(3-3 xy+x^2y^2)^2+ (1-xy)^2+ t_1 x^2 (1-xy)+t_2 x,\\
\tilde{f}_{\mathbf{t}}(x,y)=(3-3 y+y^2)^2+x^{2} (1-y)^2+ t_1 x^{4} y^2  (1-y)+t_2 x^{3} y.
\end{gather*}
\end{ejm}

\begin{ejm}
A $\mu$-constant miniversal deformation for $f(x,y)=(y^2-x^3)^2+x^{12}$ is constant. It does not satisfy the hypotheses
of the modified extended conjecture, since there are multiple eigenvalues (and multiple exponents of the monodromy)
but, nevertheless, the extended conjecture holds.
\end{ejm}

\begin{ejm}\label{ejm:acampo-23-210}
Let $f(x,y):=x(y^3-x^2)(y^{2}-x^{10})$, with $\mu$-constant miniversal deformation $f_t(x,y):=f(x,y)+t y^{7}$.
This example has multiple eigenvalues (besides~$1$) and it is a counterexample for the extended conjecture.
It is not hard to prove that $\frac{19}{13}$ is not a Yano's candidate while $-\frac{19}{13}$ as it
can be checked with \texttt{checkRoot} in \texttt{Singular} (working over $\bc(t)$ instead of randomly evaluating~$t$).
\begin{figure}[ht]
\begin{center}
\iftikz
\begin{tikzpicture}[xscale=1.5,vertice/.style={draw,circle,fill,minimum size=0.2cm,inner sep=0}]
\coordinate (M0) at (0,0);
\coordinate (M1) at (1,0);

\foreach \i in {2,...,8}
{
\coordinate (M\i) at (\i,0);
}

\foreach \i in {1,...,7}
{
\node[vertice] at (M\i) {};
}

\coordinate (Z) at (0,1);

 \draw[-{[scale=2]>}] (M1)--(M0);
 \draw[-{[scale=2]>}] (M7)--($(M8)+(Z)$);
 \draw[-{[scale=2]>}] (M7)--($(M8)-(Z)$);
 \draw[-{[scale=2]>}] (M2)--($(M2)+(Z)$);
\draw (M1) -- (M7) ;

\node[below] at (M3) {$(5,2)$};
\node[below] at (M1) {$(7,3)$};
\node[below] at (M2) {$(13,5)$};
\node[below] at (M4) {$(7,3)$};
\node[below] at (M5) {$(9,4)$};
\node[below] at (M6) {$(11,5)$};
\node[right=7pt] at (M7) {$(13,6)$};

\end{tikzpicture}
\else
\includegraphics[scale=.5]{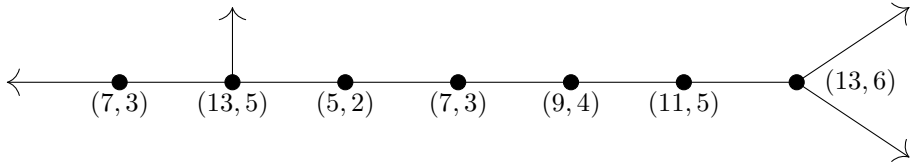}
\fi
\caption{Resolution graph for Example~\ref{ejm:acampo-23-210}}
\end{center}
\end{figure}
\end{ejm}

\begin{ejm}
Let $f(x,y):=y^{10}-x^3 y^{5}-x^{12}$.
A $\mu$-constant  versal deformation is given by
\begin{gather*} 
f_{\mathbf{t}}(x,y):=f(x,y)+t_1 x^7 y^3 + t_2 x y^9  + t_3 x^9 y^2 + t_4 x^8 y^3 + t_5 x^{11} y \\
+ t_6 x^{10} y^2 + t_7 x^9 y^3 + t_8 x^{11} y^2 + t_9 x^{10} y^3 + t_{10} x^{11} y^3.
\end{gather*}
\end{ejm}
Using random values we can prove that $-\frac{19}{15}$ and $-\frac{4}{15}$ are both roots of the Bernstein
polynomial, but only $\frac{4}{15}$ is a Yano's candidate.


\begin{thebibliography}{10}



\bibitem{ACLM-Yano2}
E.~Artal Bartolo, Pi. Cassou-Nogu{\`e}s, I.~Luengo, and A.~Melle-Hern{\'a}ndez, \emph{Yano's conjecture for 2-{P}uiseux pairs irreducible plane curve
  singularities}, Publ. RIMS Kyoto Univ. \textbf{53} (2017), no.~1, 211--239.

  
\bibitem{ACLM-Yano2a}
\bysame,
  \emph{Bernstein polynomial of 2-{P}uiseux pairs irreducible plane curve
  singularities}, Methods Appl. Anal. \textbf{24} (2017), no.~2, 185--214.
    
  
\bibitem{B72}
I.N. Bern{\v{s}}te{\u\i}n, \emph{Analytic continuation of generalized functions
  with respect to a parameter}, Funkcional. Anal. i Prilo\v zen. \textbf{6}
  (1972), no.~4, 26--40.

  

\bibitem{B:81}
J.-E. Bj{\"o}rk, \emph{The {B}ernstein class of modules on algebraic
  manifolds}, Paul {D}ubreil and {M}arie-{P}aule {M}alliavin {A}lgebra
  {S}eminar, 33rd {Y}ear ({P}aris, 1980), Lecture Notes in Math., vol. 867,
  Springer, Berlin-New York, 1981, pp.~148--156.

\bibitem{BGM92}
J.~Brian\c{c}on, F.~Geandier, and Ph. Maisonobe, \emph{D\'eformation d'une
  singularit\'e isol\'ee d'hypersurface et polyn\^omes de {B}ernstein}, Bull.
  Soc. Math. France \textbf{120} (1992), no.~1, 15--49.

\bibitem{BGMM89}
J.~Brian\c{c}on, M.~Granger, Ph. Maisonobe, and M.~Miniconi, \emph{Algorithme
  de calcul du polyn\^ome de {B}ernstein: cas non d\'eg\'en\'er\'e}, Ann. Inst.
  Fourier (Grenoble) \textbf{39} (1989), no.~3, 553--610.

\bibitem{BMT07}
J.~Brian\c{c}on, P.~Maisonobe, and T.~Torrelli, \emph{Matrice magique
  associ\'ee \`a un germe de courbe plane et division par l'id\'eal jacobien},
  Ann. Inst. Fourier (Grenoble) \textbf{57} (2007), no.~3, 919--953.

\bibitem{Br70}
E.~Brieskorn, \emph{Die {M}onodromie der isolierten {S}ingularit\"aten von
  {H}yperfl\"achen}, Manuscripta Math. \textbf{2} (1970), 103--161.

\bibitem{Pi862}
Pi. Cassou-Nogu{\`e}s, \emph{Racines de poly\^omes de {B}ernstein}, Ann. Inst.
  Fourier (Grenoble) \textbf{36} (1986), no.~4, 1--30.

\bibitem{Pi861}
\bysame, \emph{S\'eries de {D}irichlet et int\'egrales associ\'ees \`a un
  polyn\^ome \`a deux ind\'etermin\'ees}, J. Number Theory \textbf{23} (1986),
  no.~1, 1--54.

\bibitem{Pi88}
\bysame, \emph{Polyn\^ome de {B}ernstein g\'en\'erique}, Abh. Math. Sem. Univ.
  Hamburg \textbf{58} (1988), 103--123.

\bibitem{Co95}
S.~C. Coutinho, \emph{A primer of algebraic {$D$}-modules}, London Mathematical
  Society Student Texts, vol.~33, Cambridge University Press, Cambridge, 1995.

\bibitem{DGPS}
Wolfram Decker, Gert-Martin Greuel, Gerhard Pfister, and Hans Sch\"onemann,
  \emph{{\sc Singular} {4-1-1} --- {A} computer algebra system for polynomial
  computations}, \url{http://www.singular.uni-kl.de}, 2018.

\bibitem{EN}
D.~Eisenbud and W.D. Neumann, \emph{Three-dimensional link theory and
  invariants of plane curve singularities}, Annals of Mathematics Studies, vol.
  110, Princeton University Press, Princeton, NJ, 1985.

\bibitem{K:76}
M.~Kashiwara, \emph{{$B$}-functions and holonomic systems. {R}ationality of
  roots of {$B$}-functions}, Invent. Math. \textbf{38} (1976/77), no.~1,
  33--53.

\bibitem{LMM:12}
V.~Levandovskyy and J.~Mart{\'{\i}}n-Morales, \emph{Algorithms for checking
  rational roots of {$b$}-functions and their applications}, J. Algebra
  \textbf{352} (2012), 408--429.

\bibitem{LP}
I.~Luengo and G.~Pfister, \emph{Normal forms and moduli spaces of curve
  singularities with semigroup {$\langle 2p,2q,2pq+d\rangle$}}, Compositio
  Math. \textbf{76} (1990), no.~1-2, 247--264, Algebraic geometry (Berlin,
  1988).

\bibitem{M:75}
B.~Malgrange, \emph{Le polyn\^ome de {B}ernstein d'une singularit\'e isol\'ee},
  Fourier integral operators and partial differential equations ({C}olloq.
  {I}nternat., {U}niv. {N}ice, {N}ice, 1974), Springer, Berlin, 1975,
  pp.~98--119. Lecture Notes in Math., Vol. 459.

\bibitem{Mi68}
J.~Milnor, \emph{Singular points of complex hypersurfaces}, Annals of
  Mathematics Studies, No. 61, Princeton University Press, Princeton, N.J.;
  University of Tokyo Press, Tokyo, 1968.

\bibitem{MS:89}
M.~Saito, \emph{On the structure of {B}rieskorn lattice}, Ann. Inst. Fourier
  (Grenoble) \textbf{39} (1989), no.~1, 27--72.

\bibitem{MS94}
\bysame, \emph{On microlocal {$b$}-function}, Bull. Soc. Math. France
  \textbf{122} (1994), no.~2, 163--184.

\bibitem{St}
J.H.M. Steenbrink, \emph{Mixed {H}odge structure on the vanishing cohomology},
  Real and complex singularities (Proc. Ninth Nordic Summer School/NAVF Sympos.
  Math., Oslo, 1976), Sijthoff and Noordhoff, Alphen aan den Rijn, 1977,
  pp.~525--563.

\bibitem{sage}
{The Sage Developers}, \emph{{S}agemath, the {S}age {M}athematics {S}oftware
  {S}ystem ({V}ersion 8.1)}, 2017, {\tt http://www.sagemath.org}.

\bibitem{V80}
A.N. Var\v{c}enko, \emph{Gauss-{M}anin connection of isolated singular point
  and {B}ernstein polynomial}, Bull. Sci. Math. (2) \textbf{104} (1980), no.~2,
  205--223.

\bibitem{V82}
\bysame, \emph{The complex singularity index does not change along the stratum
  {$\mu ={\rm const}$}}, Funktsional. Anal. i Prilozhen. \textbf{16} (1982),
  no.~1, 1--12, 96.

\bibitem{W04}
C.T.C. Wall, \emph{Singular points of plane curves}, London Mathematical
  Society Student Texts, vol.~63, Cambridge University Press, Cambridge, 2004.

\bibitem{Y82}
T.~Yano, \emph{Exponents of singularities of plane irreducible curves}, Sci.
  Rep. Saitama Univ. Ser. A \textbf{10} (1982), no.~2, 21--28.

\end{thebibliography}

\def\cprime{$'$}
\providecommand{\bysame}{\leavevmode\hbox to3em{\hrulefill}\thinspace}
\providecommand{\MR}{\relax\ifhmode\unskip\space\fi MR }
\providecommand{\MRhref}[2]{%
  \href{http://www.ams.org/mathscinet-getitem?mr=#1}{#2}
}
\providecommand{\href}[2]{#2}

\end{document}